\newcommand\CA{{\mathscr A}}
\newcommand\CIF{{\mathcal {IF}}} 
\newcommand\HIF{{\mathcal {HIF}}}
\newcommand\CDF{{\mathcal {DF}}} 
\newcommand\HDF{{\mathcal {HDF}}}
\newcommand\BBC{{\mathbb C}}
\newcommand\BBK{{\mathbb K}}
\newcommand\BBZ{{\mathbb Z}}
\newcommand\GL{\operatorname{GL}}
\newcommand\Der{\operatorname{Der}}
\newcommand\pdeg{\operatorname{pdeg}}
\renewcommand\th{{^{\text{th}}}}
\numberwithin{equation}{section}
\theoremstyle{plain}
\newtheorem{theorem}[equation]{Theorem}
\newtheorem{conjecture}[equation]{Conjecture}
\newtheorem{corollary}[equation]{Corollary}
\newtheorem{proposition}[equation]{Proposition}
\theoremstyle{definition}
\newtheorem{definition}[equation]{Definition}
\newtheorem{remark}[equation]{Remark}
\newtheorem{example}[equation]{Example}
\begin{document}

\subjclass[2010]{20F55, 52C35, 14N20}

\title[Divisionally free Restrictions of Reflection Arrangements]{Divisionally free Restrictions of Reflection Arrangements}


\author[G. R\"ohrle]{Gerhard R\"ohrle}
\address
{Fakult\"at f\"ur Mathematik,
Ruhr-Universit\"at Bochum,
D-44780 Bochum, Germany}
\email{gerhard.roehrle@rub.de}

\keywords{hyperplane arrangements, 
Terao's Conjecture, inductively free arrangements, 
reflection arrangements, restricted arrangements, 
divisionally free arrangements}

\begin{abstract}
We study some aspects of
divisionally free arrangements 
which were recently introduced by Abe.
Crucially, 
Terao's conjecture on the combinatorial 
nature of freeness holds within this class.
We show that while it is compatible with 
products, surprisingly, it is
not closed under taking localizations.
In addition, we determine all
divisionally free restrictions 
of all reflection arrangements.
\end{abstract}

\maketitle
\allowdisplaybreaks


\section{Introduction}
The interplay between algebraic and combinatorial structures of 
hyperplane arrangements has been a driving force in the study of the field 
for many decades. At the very heart of these investigations lies 
Terao's fundamental Conjecture \ref{conj:terao} which asserts that the 
algebraic property of freeness of an arrangement 
is determined by purely combinatorial data.

\begin{conjecture}
[{\cite[Conj.~4.138]{orlikterao:arrangements}}]
\label{conj:terao}
For a fixed field, 
freeness of the arrangement $\CA$ only depends on its 
lattice  $L(\CA)$, i.e.\ is combinatorial.
\end{conjecture}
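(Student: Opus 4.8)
Since Conjecture~\ref{conj:terao} is Terao's celebrated open problem, a ``proof proposal'' can only sketch the programme one would pursue. The two endpoints are classical. By Terao's Factorization Theorem, freeness of $\CA$ forces the characteristic polynomial $\chi(\CA,t)$ --- an invariant of $L(\CA)$ --- to split into positive integral linear factors, the exponents being its roots; thus a \emph{necessary} combinatorial condition is in hand. In the other direction no known combinatorial datum is \emph{sufficient} for freeness: the factorization of $\chi(\CA,t)$ certainly is not, since there exist non-free arrangements whose characteristic polynomial nonetheless splits completely. Any proof must therefore extract, from $L(\CA)$ alone, either an explicit basis of the module $D(\CA)$ of logarithmic derivations or a certificate that $D(\CA)$ is free without producing one.

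The route most relevant to the present paper is to trap the free arrangements between combinatorial sub- and super-classes. On the combinatorial side sit the supersolvable arrangements, the inductively free arrangements and --- sharpest of all --- Abe's divisionally free arrangements, where the Division Theorem upgrades the numerical condition ``$\chi(\CA^H,t)$ divides $\chi(\CA,t)$'' along a suitable flag to genuine freeness; divisional freeness is manifestly determined by $L(\CA)$, so a proof that \emph{every} free arrangement is divisionally free would settle Conjecture~\ref{conj:terao} outright. Failing that, I would run an induction on $\dim V$ via restriction, exploiting the exact triple relating the derivation modules of a deletion and a restriction together with Ziegler's multirestriction $(\CA^H,m^H)$, and aim to reduce the conjecture to its analogue for multiarrangements of smaller rank.

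The core obstacle --- and the reason such reductions have not closed the problem --- is that the ingredients of the inductive step are themselves not visibly combinatorial: the multiplicity $m^H$ and, above all, the exponents of the free multiarrangement $(\CA^H,m^H)$ depend \emph{a priori} on the realization rather than on $L(\CA)$, so Yoshinaga-type freeness criteria relocate the difficulty instead of removing it, and Conjecture~\ref{conj:terao} remains open even in rank~$3$. A realistic plan thus splits into two incomparable tasks --- prove $\text{free}\Rightarrow\text{divisionally free}$, or exhibit a counterexample --- and, pending either, this paper follows the tractable path of determining precisely which restrictions of reflection arrangements are divisionally free, a setting where freeness, inductive freeness and divisional freeness can all be computed and compared explicitly.
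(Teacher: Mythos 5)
There is nothing to compare here: the statement is Terao's Conjecture, which the paper (following \cite[Conj.~4.138]{orlikterao:arrangements}) records as an open problem and does not --- and could not --- prove; its role in the paper is only as motivation, since Abe showed the conjecture is valid \emph{within} the class $\CDF$. You correctly recognize this and refrain from claiming a proof, and your surrounding remarks (factorization of $\chi(\CA,t)$ as a necessary but insufficient combinatorial condition, the relevance of $\CDF$ because divisional freeness is lattice-determined) are consistent with the paper's framing.

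One substantive correction to your proposed programme, though: the branch ``prove that every free arrangement is divisionally free, or exhibit a counterexample'' is already settled, and in this very paper. The reflection arrangements $\CA(G(r,r,\ell))=\CA^0_\ell(r)$ for $r,\ell\ge 3$, as well as those of $G_{24}, G_{27}, G_{29}, G_{33}, G_{34}$, are free by Terao's theorem but are \emph{not} divisionally free (Theorems \ref{thm:akl-divfree} and \ref{thm:divfreereflections}). So $\CDF$ is a proper subclass of the free arrangements, the implication $\text{free}\Rightarrow\text{divisionally free}$ is false, and that route cannot settle Conjecture~\ref{conj:terao}; what remains from your sketch is only the general (and genuinely open) difficulty you identify, namely that the data entering inductive or multiarrangement-based freeness criteria are not visibly determined by $L(\CA)$.
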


The conjecture is known to be sensitive to 
a change of the 
characteristic of the underlying field,  
cf.~\cite[\S 4]{ziegler:matroid}. 
It is still open even for dimension $3$.

Recently, T.~Abe \cite{abe:divfree} introduced 
a new class of free hyperplane arrangements, so called
\emph{divisionally free} arrangements $\CDF$
(Definition \ref{def:divfree}). This 
properly encompasses the 
class of inductively free arrangements $\CIF$ (Definition \ref{def:indfree}),
cf.~\cite[Thm.~1.6]{abe:divfree}.
The relevance of this new notion is that 
Conjecture \ref{conj:terao} 
is valid within $\CDF$, cf.~\cite[Thm.~4.4(3)]{abe:divfree}.

Each of the classes of free, inductively free and recursively free
arrangements is compatible with 
the product construction for arrangements
(\cite[Prop.\ 4.28]{orlikterao:arrangements}, 
\cite[Prop.~2.10]{hogeroehrle:inductivelyfree}, 
\cite[Thm.~2]{hogeroehrleschauenburg:localizations}).
We show that this also is the case for $\CDF$, 
cf.~Proposition \ref{prop:product-divfree}.

All the previously mentioned classes of free arrangements are known to be 
closed with respect to taking localizations
(\cite[Thm.~4.37]{orlikterao:arrangements}, 
\cite[Thm.~1]{hogeroehrleschauenburg:localizations}).
Unexpectedly, this fails for Abe's new class 
$\CDF$, see Example \ref{ex:divlocalization}.

Because of its relevance to Conjecture \ref{conj:terao}, 
it is important to know which arrangements from a given class
belong to $\CDF$, e.g.\ see \cite[\S 6]{abe:divfree}. 
Reflection arrangements have had  
a pivotal role in the theory of hyperplane arrangements ever since.  
In \cite[Cor.~4.7]{abe:divfree}, Abe determined all irreducible 
divisionally free reflection arrangements, see 
Theorem \ref{thm:divfreereflections}.
We extend this classification to all restrictions of reflection arrangements
in Theorem \ref{thm:divfreerestrictions}.

For general information about arrangements and reflection groups 
we refer the reader to \cite{orliksolomon:unitaryreflectiongroups} and
\cite{orlikterao:arrangements}. In this article we use the classification 
and labeling of the irreducible unitary reflection groups
due to Shephard and Todd, \cite{shephardtodd}.

\section{Preliminaries}
\label{ssect:recoll}

\subsection{Hyperplane Arrangements}
\label{ssect:hyper}

Let $\BBK$ be a field and let $V = \BBK^\ell$.
By a hyperplane arrangement in $V$ we mean a finite set $\CA$ of
hyperplanes in $V$. Such an arrangement is denoted $(\CA,V)$ 
or simply $\CA$. If $\dim V = \ell$
we call $\CA$ an $\ell$-arrangement.
The number of elements in $\CA$ is given by $|\CA|$.
The empty $\ell$-arrangement 
is denoted by $\Phi_\ell$.

By $L(\CA)$ we denote the set of all nonempty intersections of elements of $\CA$,
\cite[Def.~1.12]{orlikterao:arrangements}.
For $X \in L(\CA)$, we have two associated arrangements, 
firstly the subarrangement 
$\CA_X :=\{H \in \CA \mid X \subseteq H\} \subseteq \CA$
of $\CA$ and secondly, 
the \emph{restriction of $\CA$ to $X$}, $(\CA^X,X)$, where 
$\CA^X := \{ X \cap H \mid H \in \CA \setminus \CA_X\}$,
\cite[Def.~1.13]{orlikterao:arrangements}.
Note that $V$ belongs to $L(\CA)$
as the intersection of the empty 
collection of hyperplanes and $\CA^V = \CA$. 

If $0 \in H$ for each $H$ in $\CA$, then 
$\CA$ is called \emph{central}.
We only consider central arrangements.

Let $H \in \CA$ (for $\CA \neq \Phi_\ell$) and
define $\CA' := \CA \setminus\{ H\}$,
and $\CA'' := \CA^{H}$.
Then $(\CA, \CA', \CA'')$ is a \emph{triple} of arrangements, 
\cite[Def.~1.14]{orlikterao:arrangements}.

The \emph{product}
$\CA = (\CA_1 \times \CA_2, V_1 \oplus V_2)$ 
of two arrangements $(\CA_1, V_1), (\CA_2, V_2)$
is defined by
\begin{equation*}
\label{eq:product}
\CA := \CA_1 \times \CA_2 = \{H_1 \oplus V_2 \mid H_1 \in \CA_1\} \cup 
\{V_1 \oplus H_2 \mid H_2 \in \CA_2\},
\end{equation*}
see \cite[Def.~2.13]{orlikterao:arrangements}.
In particular, $|\CA| = |\CA_1| + |\CA_2|$. 

An arrangement $\CA$ is called \emph{reducible},
if it is of the form $\CA = \CA_1 \times \CA_2$, where 
$\CA_i \ne \Phi_0$ for $i=1,2$, else $\CA$
is \emph{irreducible}, 
\cite[Def.~2.15]{orlikterao:arrangements}.

If $\CA = \CA_1 \times \CA_2$ is a product, 
then by \cite[Prop.~2.14]{orlikterao:arrangements}
there is a lattice isomorphism
\[
 L(\CA_1) \times L(\CA_2) \cong L(\CA) \quad \text{by} \quad
(X_1, X_2) \mapsto X_1 \oplus X_2.
\]
It is easy to see that
for $X =  X_1 \oplus X_2 \in L(\CA)$, we have 
\begin{equation}
\label{eq:restrproduct}
\CA^X = \CA_1^{X_1} \times \CA_2^{X_2}.
\end{equation}

The \emph{characteristic polynomial} 
$\chi(\CA,t) \in \BBZ[t]$ 
of $\CA$ is defined by 
\[
\chi(\CA,t) := \sum_{X \in L(\CA)} \mu(X)t^{\dim X},
\]
where $\mu$ is the M\"obius function of $L(\CA)$, 
see \cite[Def.\ 2.52]{orlikterao:arrangements}.

If $\CA = \CA_1 \times \CA_2$ is a product, then, 
thanks to \cite[Lem.\ 2.50]{orlikterao:arrangements}, 
\begin{equation}
\label{eq:chiproduct}
\chi(\CA,t) = \chi(\CA_1,t) \cdot  \chi(\CA_2,t). 
\end{equation}

\subsection{Free Arrangements}
\label{ssect:free}
Let $S = S(V^*)$ be the symmetric algebra of the dual space $V^*$ of $V$.
If $\CA$ is an arrangement in $V$,
then for every $H \in \CA$ we may fix $\alpha_H \in V^*$ with
$H = \ker(\alpha_H)$.
We call $Q(\CA) := \prod_{H \in \CA} \alpha_H \in S$
the \emph{defining polynomial} of $\CA$.

The \emph{module of $\CA$-derivations} is the $S$-submodule of $\Der(S)$,
the $S$-module of $\BBK$-derivations of $S$, 
defined by 
\[
D(\CA) := \{\theta \in \Der(S) \mid \theta(Q(\CA)) \in Q(\CA) S\}.
\]
The arrangement $\CA$ is said to be \emph{free} 
if $D(\CA)$ is a free $S$-module.

If $\CA$ is a free $\ell$-arrangement, 
then $D(\CA)$ admits an $S$-basis of $\ell$ 
homogeneous derivations $\theta_1, \ldots, \theta_\ell$, 
by \cite[Prop.~4.18]{orlikterao:arrangements}.
The \emph{exponents} of the free arrangement $\CA$ are given by the multiset
given by the polynomial degrees of the $\theta_i$, 
$\exp\CA := \{\pdeg \theta_1, \ldots, \pdeg \theta_\ell\}$.

Terao's basic \emph{Addition-Deletion Theorem} 
plays a key role in the study of free arrangements.

\begin{theorem}
[{\cite{terao:freeI}, \cite[Thm.\ 4.51]{orlikterao:arrangements}}]
\label{thm:add-del}
Suppose $\CA \neq \Phi_\ell$ and
let $(\CA, \CA', \CA'')$ be a triple of arrangements. Then any 
two of the following statements imply the third:
\begin{itemize}
\item[(i)] $\CA$ is free with $\exp\CA = \{ b_1, \ldots , b_{\ell -1}, b_\ell\}$;
\item[(ii)] $\CA'$ is free with $\exp\CA' = \{ b_1, \ldots , b_{\ell -1}, b_\ell-1\}$;
\item[(iii)] $\CA''$ is free with $\exp\CA'' = \{ b_1, \ldots , b_{\ell -1}\}$.
\end{itemize}
\end{theorem}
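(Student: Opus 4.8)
The plan is to deduce all three implications from the exact sequence of graded $S$-modules attached to the triple. Write $S=\BBK[x_1,\dots,x_\ell]$, fix $H\in\CA$ with defining form $\alpha:=\alpha_H$, and put $\oS:=S/\alpha S$, the coordinate ring of $H$. Two elementary facts, each a consequence of $\alpha$ being a prime of the UFD $S$ that divides $Q(\CA)$ exactly once, drive the construction: every $\theta\in D(\CA)$ is tangent to every hyperplane of $\CA$, so in particular $\theta(\alpha)\in\alpha S$, whence $\theta$ descends to a derivation $\bar\theta$ of $\oS$, which moreover lies in $D(\CA'')$; and $D(\CA)=\{\theta\in D(\CA')\mid\theta(\alpha)\in\alpha S\}$. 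Working in coordinates with $\alpha=x_\ell$, these produce a homogeneous $S$-linear restriction map $\rho\colon D(\CA)\to D(\CA'')$, $\theta\mapsto\bar\theta$, with kernel $\alpha\,D(\CA')$, hence the exact sequence
\[
0\longrightarrow D(\CA')(-1)\xrightarrow{\ \cdot\,\alpha\ }D(\CA)\xrightarrow{\ \rho\ }D(\CA'')\longrightarrow C\longrightarrow 0,
\qquad C:=\operatorname{coker}\rho .
\]

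Suppose for the moment that $C=0$ under any two of the hypotheses. Then $0\to D(\CA')(-1)\to D(\CA)\to D(\CA'')\to 0$ is short exact, and since $D(\CA'')$, being a free $\oS$-module on an $(\ell-1)$-arrangement, has projective dimension $1$ over $S$, the standard projective-dimension inequalities for a short exact sequence give at once that (i) and (iii) force $D(\CA')$ to be free, and that (i) and (ii) force $D(\CA'')$ to have projective dimension $\le 1$ over $S$, hence --- by change of rings, $\alpha$ being a non-zero-divisor --- to be free over $\oS$. The implication from (ii) and (iii) to (i) instead uses Saito's criterion: one chooses a homogeneous $S$-basis $\psi_1,\dots,\psi_\ell$ of $D(\CA')$ with $\overline{\psi_\ell(\alpha)}$ generating the ideal $\fq:=\{\overline{\theta(\alpha)}\mid\theta\in D(\CA')\}$ of $\oS$ --- available precisely because $C=0$ forces $\fq$ to be principal, generated in the smallest possible degree $b_\ell-1$ --- and forms $\theta_i:=\psi_i-g_i\psi_\ell\in D(\CA)$ for $i<\ell$, where $g_i\in S$ lifts $\overline{\psi_i(\alpha)}/\overline{\psi_\ell(\alpha)}$, together with $\alpha\psi_\ell\in D(\CA)$. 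The matrix of partial derivatives of $\theta_1,\dots,\theta_{\ell-1},\alpha\psi_\ell$ arises from that of $\psi_1,\dots,\psi_\ell$ by elementary row operations followed by multiplying one row by $\alpha$, so its determinant is a non-zero scalar multiple of $\alpha\,Q(\CA')=Q(\CA)$; by Saito's criterion these derivations form an $S$-basis of $D(\CA)$, so $\CA$ is free, and tracking polynomial degrees yields $\exp\CA=\{b_1,\dots,b_\ell\}$. Throughout, the exponent multisets can be cross-checked against the deletion-restriction identity $\chi(\CA,t)=\chi(\CA',t)-\chi(\CA'',t)$ and the factorisation of the characteristic polynomial of a free arrangement over its exponents.

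The main obstacle is the standing assumption above: that $C=\operatorname{coker}\rho$ vanishes whenever two of the three freeness statements hold --- equivalently, that $\fq$ is principal of degree $b_\ell-1$. I would attack this by localisation. The formation of $D(-)$ commutes with localising at a prime $\fp$ of $\oS$, and at the generic point of $H$, and --- more delicately --- at the generic point of any codimension-one flat of $\CA''$, the arrangement $\CA$ becomes, after a linear change of coordinates, a pencil of lines through the origin times an empty arrangement, for which addition-deletion is immediate and $\rho$ is plainly surjective. Hence $C$ is supported in codimension $\ge 2$ in $H$. Combining this support estimate with the freeness hypotheses --- for instance via the reflexivity of $D(\CA'')$ over $\oS$, or via Terao's original Hilbert-series bookkeeping --- one concludes $C=0$; making this last step precise is where the genuine work of the theorem is concentrated.
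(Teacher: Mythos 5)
The paper does not prove this statement at all --- it is quoted verbatim from Terao, resp.\ \cite[Thm.~4.51]{orlikterao:arrangements} --- so the benchmark is the classical proof in the cited source, which is built on exactly the exact sequence you write down, $0 \to D(\CA')(-1) \to D(\CA) \xrightarrow{\rho} D(\CA'')$, together with Saito's criterion and careful degree/Poincar\'e-series bookkeeping. Your skeleton therefore matches the standard route, and the projective-dimension and Saito-type deductions you draw \emph{assuming} $C=\operatorname{coker}\rho=0$ are fine. But the vanishing of $C$ (equivalently, principality of $\fq$ in degree $b_\ell-1$) under each pair of hypotheses is not a technical afterthought: it \emph{is} the content of the theorem, and you explicitly leave it open. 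So as it stands the proposal has a genuine gap at its center.

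Moreover, the route you sketch for closing it does not work as stated. Knowing that $C$ is supported in codimension $\ge 2$ in $H$ and that $D(\CA'')$ is reflexive (even free) over $\oS$ does not force $C=0$: a proper submodule of a free module can agree with it in codimension one (e.g.\ $\fm M\subset M$), so one needs the \emph{image} $\rho(D(\CA))\cong D(\CA)/\alpha D(\CA')$ itself to be reflexive, which requires a depth/Auslander--Buchsbaum argument using the freeness of both $\CA$ and $\CA'$ --- and that only handles the direction (i)$+$(ii)$\Rightarrow$(iii). For (ii)$+$(iii)$\Rightarrow$(i) your argument is in danger of circularity: you justify principality of $\fq$ by ``$C=0$'', but $C$ is defined via $D(\CA)$, whose structure is exactly what is to be established; in the classical proof one instead shows directly, from the exponents of $\CA'$ and $\CA''$ and the relation $D(\CA')/D(\CA)\cong\fq$ (up to shift), that $\fq$ contains --- and is generated by --- an element of degree $b_\ell-1$, via the deletion--restriction identity for $\chi$ and the factorization theorem. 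Similarly the assertion that $\fq$ is generated ``in the smallest possible degree $b_\ell-1$'' is precisely the degree estimate that has to be proved, not read off. Supplying these degree arguments (or an equivalent Hilbert-series computation on the four-term sequence) is what would turn your outline into a proof; without them the three implications are not established.
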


The following is Terao's celebrated  
\emph{Factorization Theorem} for  
free arrangements.

\begin{theorem}
[{\cite[Thm.\ 4.137]{orlikterao:arrangements}}]
\label{thm:factorization}
If $\CA$ is free with $\exp\CA = \{ b_1, \ldots , b_{\ell -1}, b_\ell\}$, then 
\[
\chi(\CA,t) = \prod\limits_{i=1}^\ell (t - b_i).
\]
\end{theorem}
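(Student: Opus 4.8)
The plan is to deduce Theorem~\ref{thm:factorization} from the Solomon--Terao formula, which for an arbitrary central $\ell$-arrangement expresses $\chi(\CA,t)$ as a limit of a two-variable generating function built from the modules of logarithmic $p$-derivations $D^{(p)}(\CA)\subseteq\textstyle\bigwedge^p\Der(S)$, with $D^{(0)}(\CA)=S$ and $D^{(1)}(\CA)=D(\CA)$. Writing $\operatorname{Poin}(M,x)$ for the Poincar\'e (Hilbert) series of a graded $S$-module $M$ and setting
\[
\Phi(\CA;x,y):=\sum_{p=0}^{\ell}\operatorname{Poin}\bigl(D^{(p)}(\CA),x\bigr)\,y^p,
\]
the Solomon--Terao formula, cf.\ \cite{orlikterao:arrangements}, recovers $\chi(\CA,t)$ as a prescribed substitution $y\mapsto y(x,t)$ into $\Phi(\CA;x,y)$ followed by the limit $x\to1$, where $y(1,t)=-1$ and $\partial_x y(1,t)=t$. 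So the three steps are: (1) invoke this formula in the generality of all central arrangements; (2) show that if $\CA$ is free then $\Phi(\CA;x,y)$ factors; (3) perform the limit and read off $\prod_{i=1}^{\ell}(t-b_i)$.

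For step (2), the key input is that freeness of $D(\CA)$ propagates to its exterior powers: if $\theta_1,\dots,\theta_\ell$ is a homogeneous $S$-basis of $D(\CA)$ with $\pdeg\theta_i=b_i$, then for each $p$ the module $\bigwedge^p D(\CA)$ is free with homogeneous basis $\{\theta_{i_1}\wedge\cdots\wedge\theta_{i_p}\mid i_1<\dots<i_p\}$ of polynomial degrees $b_{i_1}+\cdots+b_{i_p}$, and it coincides with $D^{(p)}(\CA)$ (both reflexive of the same rank, with the natural comparison map an isomorphism in codimension one); cf.\ \cite{orlikterao:arrangements}. Hence $\operatorname{Poin}(D^{(p)}(\CA),x)=e_p(x^{b_1},\dots,x^{b_\ell})/(1-x)^\ell$, where $e_p$ denotes the $p$-th elementary symmetric polynomial and $\operatorname{Poin}(S,x)=(1-x)^{-\ell}$, so that
\[
\Phi(\CA;x,y)=\frac{1}{(1-x)^\ell}\sum_{p=0}^{\ell}e_p\bigl(x^{b_1},\dots,x^{b_\ell}\bigr)\,y^p=\frac{\prod_{i=1}^{\ell}\bigl(1+x^{b_i}y\bigr)}{(1-x)^\ell}.
\]

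Step (3) is then a routine limit. After the Solomon--Terao substitution, each numerator factor satisfies the identity $1+x^{b_i}y=1-x^{b_i}+x^{b_i}(y+1)$; using $x^{b_i}=1+b_i(x-1)+O((x-1)^2)$ together with $y+1=t(x-1)+O((x-1)^2)$, its leading term at $x=1$ is $(t-b_i)(x-1)+O((x-1)^2)$. Thus $\prod_{i=1}^{\ell}(1+x^{b_i}y)$ has a zero of order $\ell$ at $x=1$ with leading coefficient $\prod_{i=1}^{\ell}(t-b_i)$, which cancels the pole $(1-x)^{-\ell}$, and the limit yields $\chi(\CA,t)=\prod_{i=1}^{\ell}(t-b_i)$.

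I expect the genuine obstacles to be the two inputs rather than the computation: the Solomon--Terao formula itself, whose proof rests on a deletion--restriction long exact sequence for the complexes $D^{(\bullet)}(\CA)$ and is substantially deeper than the Addition--Deletion Theorem~\ref{thm:add-del}, and the claim that freeness passes to all exterior powers with the stated exponents. If one is content with the special case of inductively free $\CA$, the theorem instead follows by an elementary induction on $|\CA|$: the empty arrangement $\Phi_\ell$ has $\chi(\Phi_\ell,t)=t^\ell$ and $\exp\Phi_\ell=\{0,\dots,0\}$, and in an addition--deletion triple $(\CA,\CA',\CA'')$ the identity $\chi(\CA,t)=\chi(\CA',t)-\chi(\CA'',t)$ together with Theorem~\ref{thm:add-del} (comparing exponents) propagates the product form; but this shortcut does not reach free arrangements that fail to be inductively free, which is precisely why the Solomon--Terao route is needed for the statement as given.
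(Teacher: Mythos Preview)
The paper does not prove Theorem~\ref{thm:factorization}; it is quoted from \cite[Thm.~4.137]{orlikterao:arrangements} without argument and used only as background input (e.g.\ in Remark~\ref{rem:triples}). So there is no in-paper proof to compare your proposal against.

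For what it is worth, your outline is exactly the argument of the cited source: invoke the Solomon--Terao formula, use that freeness of $D(\CA)$ forces $D^{(p)}(\CA)\cong\bigwedge^p D(\CA)$ with Poincar\'e series $e_p(x^{b_1},\dots,x^{b_\ell})/(1-x)^\ell$, factor the two-variable series, and take the limit. One small slip in your step~(3): with the normalization $y+1=t(x-1)+O((x-1)^2)$ each factor expands as $1+x^{b_i}y=(t-b_i)(x-1)+O((x-1)^2)$, so after dividing by $(1-x)^\ell=(-1)^\ell(x-1)^\ell$ the limit is $(-1)^\ell\prod_i(t-b_i)$, not $\prod_i(t-b_i)$. (Test it on $\CA=\Phi_\ell$: your $\Phi(\Phi_\ell;x,y)=(1+y)^\ell/(1-x)^\ell$ specializes to $(-t)^\ell$, whereas $\chi(\Phi_\ell,t)=t^\ell$.) In \cite{orlikterao:arrangements} this sign is absorbed into the exact form of the specialization, so you should align your substitution with their convention rather than the heuristic one you wrote down.
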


\subsection{Inductively Free Arrangements}
\label{ssect:indfree}

An iterative application of the 
addition part of Theorem \ref{thm:add-del} leads to the class of  
\emph{inductively free} arrangements.

\begin{definition}
[{\cite[Def.~4.53]{orlikterao:arrangements}}]
\label{def:indfree}
The class $\CIF$ of \emph{inductively free} arrangements 
is the smallest class of arrangements subject to
\begin{itemize}
\item[(i)] $\Phi_\ell \in \CIF$ for each $\ell \ge 0$;
\item[(ii)] if there exists a hyperplane $H_0 \in \CA$ such that both
$\CA'$ and $\CA''$ belong to $\CIF$, and $\exp \CA '' \subseteq \exp \CA'$, 
then $\CA$ also belongs to $\CIF$.
\end{itemize}
\end{definition}

There is a hereditary version of 
$\CIF$, cf.~\cite[\S 6.4]{orlikterao:arrangements}.

\begin{definition}
\label{def:heredindfree}
The arrangement $\CA$ is called 
\emph{hereditarily inductively free} provided 
that $\CA^X$ is inductively free for each $X \in L(\CA)$.
Denote this class by $\HIF$.
\end{definition}

Note that if $\CA$ is hereditarily inductively free, 
it is inductively free as $V \in L(\CA)$ and $\CA^V = \CA$.

\subsection{Reflection Arrangements}
\label{ssect:refl}

Let $W \subseteq \GL(V)$ be a finite, 
complex reflection group acting on the complex vector space $V=\BBC^\ell$.
The \emph{reflection arrangement} of $W$ in $V$ is the 
hyperplane arrangement $\CA(W)$ 
consisting of the reflecting hyperplanes 
of the elements in $W$ acting as reflections on $V$.

Terao \cite{terao:freeI} has shown that every 
reflection arrangement $\CA(W)$ is free 
and that the exponents of $\CA(W)$
coincide with the coexponents of $W$, 
cf.~\cite[Prop.~6.59 and Thm.~6.60]{orlikterao:arrangements}. 

We recall the classification of the 
inductively free reflection arrangements from 
\cite{hogeroehrle:inductivelyfree}.

\begin{theorem}
[{\cite[Thms.~1.1 and 1.2]{hogeroehrle:inductivelyfree}}]
\label{thm:indfree1}
Let $W$ be a finite, irreducible, complex 
reflection group with reflection arrangement 
$\CA(W)$. Then the following hold:
\begin{itemize}
\item[(i)] $\CA(W)$ is 
inductively free if and only if 
$W$ does not admit an irreducible factor
isomorphic to a monomial group 
$G(r,r,\ell)$ for $r, \ell \ge 3$, 
$G_{24}, G_{27}, G_{29}, G_{31}, G_{33}$, or $G_{34}$.
\item[(ii)] $\CA(W)$ is inductively free
if and only if it is hereditarily inductively free.
\end{itemize}
\end{theorem}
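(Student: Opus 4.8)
The plan is to settle both parts by running through the Shephard--Todd classification of the irreducible complex reflection groups. Every arrangement of rank at most $2$ is inductively free: one removes the hyperplanes one at a time, and at each step $\CA'$ stays inductively free by induction, while $\CA''$ lives in a space of dimension at most $1$, so that the exponent multiset $\exp\CA''$ (a single value) is automatically contained in $\exp\CA'$. Hence only the groups $W$ of rank $\ell\ge 3$ require work, and these split into the infinite family $G(r,p,\ell)$ with $p\mid r$ and the finitely many exceptional groups $G_{23},\dots,G_{37}$. Statement (i) thus reduces to deciding, case by case, whether $\CA(W)\in\CIF$.

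For the positive part of (i): if $p<r$ then $\CA(G(r,p,\ell))=\CA(G(r,1,\ell))$, and $\CA(G(r,1,\ell))$ is a supersolvable arrangement, so it is inductively free by the standard fact that supersolvable arrangements lie in $\CIF$; the remaining member $\CA(G(r,r,\ell))$ is the braid arrangement for $r=1$ and the Coxeter arrangement of type $D_\ell$ for $r=2$, and both of these are inductively free as well. Among the exceptional groups, $G_{23},G_{28},G_{30},G_{35},G_{36},G_{37}$ give Coxeter arrangements, which are all known to be inductively free; the three remaining unexcluded cases $G_{25},G_{26},G_{32}$ are handled by exhibiting an explicit inductive chain, picking at each stage a hyperplane $H$, identifying $\CA\setminus\{H\}$ and $\CA^H$ inside the known intersection lattices, and checking the hypotheses of Theorem~\ref{thm:add-del} --- bookkeeping best carried out with \GAP.

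For the negative part of (i) the engine is the combination of Theorems~\ref{thm:add-del} and~\ref{thm:factorization}. If $\CA=\CA(W)$ were inductively free there would be a hyperplane $H$ with $\CA'=\CA\setminus\{H\}$ and $\CA''=\CA^H$ both free, $\exp\CA''\subseteq\exp\CA'$, and $\exp\CA'$ equal to $\exp\CA$ with one entry lowered by one. Now $\CA$ is free with exponents the coexponents of $W$, so $\chi(\CA,t)$ is known and splits into linear factors over $\BBZ$ by Theorem~\ref{thm:factorization}; moreover $\chi(\CA',t)=\chi(\CA,t)+\chi(\CA^H,t)$. Running over the (few) $W$-orbits of hyperplanes and computing $\chi(\CA^H,t)$ from the intersection lattice, one checks in each case that either $\chi(\CA^H,t)$ or $\chi(\CA',t)$ fails to split into linear factors over $\BBZ$ --- so that $\CA^H$ or $\CA'$ is not free by Theorem~\ref{thm:factorization}, and a fortiori not inductively free --- or else the multisets $\exp\CA^H$ and $\exp\CA'$ are incompatible. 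For $G(r,r,\ell)$ with $r,\ell\ge 3$ there is a single orbit of hyperplanes, and computing $\chi(\CA',t)$ exhibits the obstruction at the very first step; for $G_{24},G_{27},G_{29},G_{31},G_{33},G_{34}$ the same is done orbit by orbit, with \GAP for the larger groups. I expect this finite but substantial case analysis for $G_{31},G_{33},G_{34}$ to be the main obstacle, both here and in part (ii).

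Part~(ii) is then short. That hereditary inductive freeness implies inductive freeness is immediate since $\CA(W)^V=\CA(W)$. For the converse, if $\CA(W)$ is inductively free then by (i) the group $W$ is not one of the exceptions; for each such $W$ of rank $\ge 3$ one lists the isomorphism types of the restrictions $\CA(W)^X$, $X\in L(\CA(W))$, as recorded in the tables of Orlik--Solomon and Orlik--Terao, and verifies that each lies in $\CIF$. In the majority of cases $\CA(W)^X$ is a product of reflection arrangements of smaller, likewise unexcluded groups, hence inductively free because $\CIF$ is closed under products and by (i); the finitely many restrictions not of this shape are treated individually, again with computer assistance. For an excluded $W$, $\CA(W)$ itself is not inductively free by (i), so it certainly cannot be hereditarily inductively free. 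The delicate point in this last part is ensuring the list of restriction types is complete and that the exotic restrictions --- notably those of $\CA(G_{32})$ and of $\CA(E_6),\CA(E_7),\CA(E_8)$ --- are correctly identified.
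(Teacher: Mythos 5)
First, note that the paper you are reading does not prove this statement at all: Theorem~\ref{thm:indfree1} is quoted verbatim from Hoge--R\"ohrle \cite{hogeroehrle:inductivelyfree}, so there is no internal proof to compare against. Your sketch is essentially a reconstruction of the strategy of that reference: reduce to rank $\ge 3$, handle the positive cases via supersolvability of $\CA(G(r,1,\ell))$ (and the equality $\CA(G(r,p,\ell))=\CA(G(r,1,\ell))$ for $p<r$), known inductive freeness of Coxeter arrangements, and explicit (computer-assisted) induction chains for $G_{25},G_{26},G_{32}$; then rule out the remaining groups via Theorems~\ref{thm:add-del} and~\ref{thm:factorization}; and prove (ii) by checking all restriction types from the Orlik--Solomon/Orlik--Terao tables. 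That outline is the right one, and the first-step obstruction you describe does work for $G(r,r,\ell)$, $r,\ell\ge 3$ (the exponents of the restriction $\CA^2_{\ell-1}(r)$ are not contained in $\exp\CA$).

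The genuine gap is in the claim that the exceptional negative cases can all be dispatched at the first step by your two tests (non-splitting of $\chi(\CA^H,t)$ or $\chi(\CA',t)$ over $\BBZ$, or incompatibility of exponent multisets). For $G_{31}$ this provably cannot succeed: by Theorem~\ref{thm:divfreereflections} (Abe), $\CA(G_{31})$ is divisionally free, so for the (unique orbit of) hyperplane $H$ the restriction is free with $\exp\CA^H\subset\exp\CA=\{1,13,17,29\}$, whence $\chi(\CA',t)=\chi(\CA,t)+\chi(\CA^H,t)$ splits over $\BBZ$ (e.g.\ as $(t-1)(t-13)(t-17)(t-28)$) and the multisets are perfectly compatible. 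All of your listed obstructions pass, so ruling out inductive freeness forces you either to prove that the deletion $\CA'$ is \emph{not} free --- which cannot be certified by the characteristic polynomial and requires genuine algebraic work --- or to descend further into the induction tree and exhaust all candidate chains, which is precisely where the cited proof (and similarly for $G_{33}$, $G_{34}$) has to do substantial, computer-assisted case analysis beyond the combinatorial tests you propose. You flag these groups as ``the main obstacle,'' but as written your method has no tool that can close them, so the negative half of (i) (and hence the input to (ii)) is not established by the proposal.
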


\subsection{Restrictions of Reflection Arrangements}
\label{ssec:akl}

Orlik and Solomon defined intermediate 
arrangements $\CA^k_\ell(r)$ in 
\cite[\S 2]{orliksolomon:unitaryreflectiongroups}
(cf.\ \cite[\S 6.4]{orlikterao:arrangements}) which
interpolate between the
reflection arrangements of $G(r,r,\ell)$ and $G(r,1,\ell)$. 
They show up as restrictions of the reflection arrangement
of $G(r,r,\ell)$, 
\cite[Prop.\ 2.14]{orliksolomon:unitaryreflectiongroups} 
(cf.~\cite[Prop.\ 6.84]{orlikterao:arrangements}).

For 
$\ell \geq 2$ and $0 \leq k \leq \ell$ the defining polynomial of
$\CA^k_\ell(r)$ is given by
\[
Q(\CA^k_\ell(r)) = x_1 \cdots x_k\prod\limits_{\substack{1 \leq i < j \leq \ell\\ 0 \leq n < r}}(x_i - \zeta^nx_j),
\]
where $\zeta$ is a primitive $r\th$ root of unity,
so that 
$\CA^\ell_\ell(r) = \CA(G(r,1,\ell))$ and 
$\CA^0_\ell(r) = \CA(G(r,r,\ell))$. 
For $k \neq 0, \ell$, these are not reflection arrangements
themselves. 

We recall the classification of the 
inductively free restrictions of all  
reflection arrangements. 

\begin{theorem}
[{\cite[Thms.~1.2 and 1.3]{amendhogeroehrle:indfree}}]
\label{thm:indfree}
Let $W$ be a finite, irreducible, complex 
reflection group with reflection arrangement 
$\CA(W)$ and let $V \ne X \in L(\CA(W))$. 
\begin{itemize}
\item[(a)] 
$\CA(W)^X$ is inductively free 
if and only if one of the following holds:
\begin{itemize}
\item[(i)] 
$\CA$ is inductively free;
\item[(ii)] 
$W = G(r,r,\ell)$, $r \ge 3$ and 
$\CA(W)^X \cong \CA^k_p(r)$, for $p = \dim X$ and $p - 2 \leq k \leq p$; 
\item[(iii)] 
$W = G_{24}, G_{27}, G_{29}, G_{31}, G_{33}$, or $G_{34}$ 
and $X \in L(\CA(W))$ with  $\dim X \leq 3$.
\end{itemize}
\item[(b)]
$\CA(W)^X$ is inductively free 
if and only if it is hereditarily inductively free.
\end{itemize}
\end{theorem}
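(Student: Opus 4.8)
The plan is to establish both parts at once by reducing, via Theorem~\ref{thm:indfree1}, to the irreducible reflection groups whose reflection arrangement is \emph{not} inductively free, and then to run through the flats of those few groups type by type. First note that part~(b) follows from part~(a) together with transitivity of restriction. The implication ``hereditarily inductively free $\Rightarrow$ inductively free'' is immediate from $\CA(W)^X = (\CA(W)^X)^X$ with $X \in L(\CA(W)^X)$. Conversely, if $\CA(W)^X \in \CIF$, then using $(\CA(W)^X)^Y = \CA(W)^Y$ for $Y \in L(\CA(W))$ with $Y \subseteq X$ it suffices to know that each of the three conditions in~(a) is inherited by the smaller flat $Y$: for~(i) this is Theorem~\ref{thm:indfree1}(ii); for~(iii) it is trivial since $\dim Y \le \dim X \le 3$; and for~(ii) it amounts to the combinatorial fact, read off from the description of $L(\CA^k_p(r))$ in \cite[\S 2]{orliksolomon:unitaryreflectiongroups}, that every restriction of $\CA^k_p(r)$ with $k \ge p-2$ is again of the form $\CA^{k'}_{p'}(r)$ (up to an empty factor) with $k' \ge p'-2$.

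For part~(a) itself, if $\CA(W)$ is inductively free then it is hereditarily inductively free by Theorem~\ref{thm:indfree1}(ii), so $\CA(W)^X \in \CIF$ for all $X$, which is case~(i). We may therefore assume $\CA(W)$ is not inductively free, so by Theorem~\ref{thm:indfree1}(i) the group $W$ is a monomial group $G(r,r,\ell)$ with $r,\ell \ge 3$, or one of $G_{24}, G_{27}, G_{29}, G_{31}, G_{33}, G_{34}$, and we must show that $\CA(W)^X$ is inductively free precisely in the cases described in~(ii) and~(iii). For $W = G(r,r,\ell)$, by \cite[Prop.~2.14]{orliksolomon:unitaryreflectiongroups} (see also \cite[Prop.~6.84]{orlikterao:arrangements}) every restriction $\CA(W)^X$ is isomorphic, up to an empty factor, to an intermediate arrangement $\CA^k_p(r)$ with $p = \dim X$, so the problem becomes: for which $k$ is $\CA^k_p(r)$ inductively free? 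The positive half, $p-2 \le k \le p$, is proved by an induction on $p$ and $k$ with Theorem~\ref{thm:add-del}: one exhibits, for a suitably chosen hyperplane, a triple relating $\CA^k_p(r)$ to $\CA^{k\mp 1}_p(r)$ and to the reflection arrangement $\CA(G(r,1,p-1))$ (which is inductively free), and one tracks the exponents, which have the shape $\{1, r+1, 2r+1, \dots, (p-2)r+1, \ast\}$, to verify the compatibility condition of Definition~\ref{def:indfree}(ii) at every step; the base cases are small $p$ and $k = p$, where $\CA^p_p(r) = \CA(G(r,1,p))$ is inductively free. The negative half, $0 \le k \le p-3$ and $r \ge 3$, is the core of the proof and is handled by descent: the group of linear automorphisms of $\CA^k_p(r)$ acts on its hyperplanes with a bounded number of orbits (independent of $r$), so an inductive chain for $\CA^k_p(r)$ would have to start by deleting, up to symmetry, one of finitely many hyperplanes; for each such hyperplane one identifies the deletion $\CA'$ and shows that either $\CA'$ is not free --- detected by computing $\chi(\CA',t)$ and invoking the Factorization Theorem~\ref{thm:factorization}, since a non-real or wrongly-multiple root precludes freeness --- or $\CA'$ is again an intermediate arrangement $\CA^{k'}_{p'}(r)$ with $k' \le p'-3$, to which the induction applies; the base of the descent is $k = 0$, where $\CA^0_p(r) = \CA(G(r,r,p))$ is not inductively free by Theorem~\ref{thm:indfree1}(i) (\cite{hogeroehrle:inductivelyfree}).

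For each of the six exceptional groups the flat lattice $L(\CA(W))$ is finite, and one proceeds orbit by orbit under the action of $W$ (enumerated with \GAP{} and \CHEVIE{}). If $\dim X \le 2$, then $\CA(W)^X$ is an arrangement in a space of dimension at most $2$ and hence inductively free by an elementary induction. If $\dim X = 3$, one checks for each of the finitely many orbit representatives that $\CA(W)^X$ admits an inductive chain; this is a terminating, machine-assisted search using the recursive criterion of Definition~\ref{def:indfree}. If $\dim X \ge 4$ --- which occurs only for $G_{29}, G_{31}, G_{33}, G_{34}$ --- one shows $\CA(W)^X \notin \CIF$: it is enough to verify that for \emph{every} hyperplane $H$ of $\CA(W)^X$ the deletion $(\CA(W)^X) \setminus \{H\}$ fails to be free, which again is decided by computing its characteristic polynomial and applying Theorem~\ref{thm:factorization}; in the few cases where some deletion is free one descends one further level. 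The remaining flat $X = V$ is excluded by hypothesis and corresponds to $\CA(W)$ itself, covered by Theorem~\ref{thm:indfree1}(i).

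The main obstacle is the negative direction throughout. Proving that an arrangement is \emph{not} inductively free is a statement about the non-existence of any addition chain, so one must inspect all possible first deletions (and, when these are still free, deeper ones); the real work lies in organising the symmetry reduction so that only a controlled number of deletions need to be examined, and --- for $G(r,r,\ell)$ --- in arranging the descent on $(p,k)$ so that it closes uniformly in $r$, with non-freeness of the relevant intermediate arrangements certified through their explicitly computed characteristic polynomials. For the exceptional groups the corresponding verifications, though finite, are lengthy and form the part of the argument most dependent on computer algebra.
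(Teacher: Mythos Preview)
This theorem is not proved in the present paper. It appears in the preliminaries section (\S\ref{ssec:akl}) as a quoted result from \cite[Thms.~1.2 and 1.3]{amendhogeroehrle:indfree}, with no proof supplied here; the paper only \emph{uses} it (e.g.\ in the proofs of Theorems~\ref{thm:divfreerestrictions} and~\ref{thm:divheredfreerestrictionreflections}). Consequently there is no ``paper's own proof'' against which to compare your proposal.

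That said, your outline is a plausible reconstruction of the strategy in \cite{amendhogeroehrle:indfree}: reduce via Theorem~\ref{thm:indfree1} to the groups whose reflection arrangement is not inductively free, use \cite[Prop.~2.14]{orliksolomon:unitaryreflectiongroups} to identify the restrictions for $G(r,r,\ell)$ with the intermediate arrangements $\CA^k_p(r)$, and settle the exceptional groups by a finite, computer-assisted search. Two caveats worth flagging. First, your ``descent'' argument for the non-inductive-freeness of $\CA^k_p(r)$ with $k\le p-3$ is only a sketch: showing that \emph{no} deletion can start an inductive chain, uniformly in $r$, is the genuinely delicate part of \cite{amendhogeroehrle:indfree}, and your description (symmetry reduction plus factorisation-theorem obstructions) does not by itself guarantee the recursion closes. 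Second, your derivation of part~(b) from part~(a) for case~(ii) relies on the claim that every restriction of $\CA^k_p(r)$ with $k\ge p-2$ is again of that shape; this is true but needs the explicit restriction combinatorics of \cite[Prop.~2.11]{orliksolomon:unitaryreflectiongroups}, not just the lattice description. If your aim was to supply a proof where the paper gives none, you should cite \cite{amendhogeroehrle:indfree} directly rather than re-derive it here.
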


\subsection{Divisionally Free Arrangements}
\label{ssect:divfree}
First we recall the key result from \cite{abe:divfree}.

\begin{theorem}
[{\cite[Thm.~1.1]{abe:divfree}}]
\label{thm:abe-div}
Let $\CA \ne \Phi_\ell$.
Suppose there is a hyperplane $H$ in $\CA$ such that the restriction 
$\CA^H$ is free and that 
$\chi(\CA^H,t)$ divides $\chi(\CA,t)$.
Then $\CA$ is free.
\end{theorem}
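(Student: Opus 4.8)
The plan is to reduce freeness of $\CA$ to freeness of a multiarrangement supported on $\CA^H$ via the Ziegler restriction, and then to exploit the divisibility hypothesis to verify the conditions this requires. After splitting off an empty factor --- which by \eqref{eq:restrproduct} and \eqref{eq:chiproduct} affects neither the hypothesis nor the conclusion --- I may assume $\CA$ is essential, and I argue by induction on $\ell=\dim V$, the cases $\ell\le 2$ being trivial. Form the triple $(\CA,\CA',\CA'')$ with $\CA''=\CA^H$ and equip $\CA''$ with the Ziegler multiplicity $m^\ast(Y):=|\CA_Y|-1$ for $Y\in\CA''$. Two results frame the argument: Ziegler's restriction theorem, by which freeness of $\CA$ with $\exp\CA=\{1,d_2,\dots,d_\ell\}$ forces the multiarrangement $(\CA'',m^\ast)$ to be free with exponents $\{d_2,\dots,d_\ell\}$; and Yoshinaga's criterion, by which an essential arrangement of rank $\ell\ge 3$ is free as soon as $(\CA'',m^\ast)$ is free and the second Betti numbers agree, $b_2(\CA)=b_2(\CA'',m^\ast)$. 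So it suffices to produce freeness of $(\CA'',m^\ast)$ together with this numerical coincidence.

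Now the hypotheses enter. Essentiality of $\CA$ makes $\CA''$ essential, and as $\CA''$ is free, Theorem \ref{thm:factorization} gives $\chi(\CA'',t)=\prod_{i=1}^{\ell-1}(t-e_i)$ with $\{e_1,\dots,e_{\ell-1}\}=\exp\CA''$ and, after relabelling, $e_{\ell-1}=1$. Since $\chi(\CA^H,t)\mid\chi(\CA,t)$ is an identity of monic polynomials of degrees $\ell-1$ and $\ell$, we get $\chi(\CA,t)=(t-d)\,\chi(\CA'',t)$ with $d=|\CA|-|\CA''|$, so that $\chi(\CA,t)/(t-1)=(t-d)\prod_{i=1}^{\ell-2}(t-e_i)$ splits completely over $\BBZ$. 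Hence, should $\CA$ be free, its exponents are $\{d,e_1,\dots,e_{\ell-1}\}$ and, by Ziegler's theorem, those of $(\CA'',m^\ast)$ are $\{d,e_1,\dots,e_{\ell-2}\}$; and if $(\CA'',m^\ast)$ is indeed free with these exponents, then its Abe--Terao--Wakefield characteristic polynomial $\chi((\CA'',m^\ast),t)$ equals $(t-d)\prod_{i=1}^{\ell-2}(t-e_i)=\chi(\CA,t)/(t-1)$, so all Betti numbers coincide and Yoshinaga's numerical condition holds. The whole problem therefore collapses to one claim: \emph{$(\CA'',m^\ast)$ is free with exponents $\{d,e_1,\dots,e_{\ell-2}\}$}. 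To establish it I would feed the complete factorization of $\chi(\CA,t)/(t-1)$ into a division-type argument for the multiarrangement $(\CA'',m^\ast)$, whose corank-two localizations $(\CA'',m^\ast)_Y$ are free automatically (being of rank two); since this claim is not literally the inductive hypothesis on $\ell$, it would have to be built into a strengthened induction proving the Division Theorem for simple arrangements and for Ziegler restrictions in tandem.

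I expect this last step to be the real obstacle: passing from a divisibility constraint on the \emph{global} characteristic polynomial of $\CA$ to \emph{freeness of the Ziegler multiarrangement} $(\CA'',m^\ast)$. Ziegler's and Yoshinaga's theorems bridge only one direction, and recovering freeness of $(\CA'',m^\ast)$ demands a bona fide multiarrangement analogue of the division mechanism; concretely, the technical heart is to show that the gap between $\chi((\CA'',m^\ast),t)$ and $\chi(\CA,t)/(t-1)$ --- a sum of local contributions, in the Abe--Terao--Wakefield local--global formula, that already cancels in the two top degrees --- must vanish in its entirety precisely because $\chi(\CA,t)/(t-1)$ factors completely as the hypothesis imposes. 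By contrast the base case $\ell=3$ is soft: every rank-two multiarrangement is free, so Yoshinaga's criterion there is just the identity $b_2(\CA)=b_2(\CA^H,m^\ast)$, which the factorization $\chi(\CA,t)=(t-d)\chi(\CA^H,t)$ supplies immediately.
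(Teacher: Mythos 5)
First, a point of reference: the paper does not prove this statement at all --- it is quoted verbatim from Abe \cite[Thm.~1.1]{abe:divfree}, so the only fair comparison is with Abe's original argument. Your set-up does reconstruct the skeleton of that argument (Ziegler restriction $(\CA^H,m^\ast)$ with $m^\ast(Y)=|\CA_Y|-1$, plus the Yoshinaga/Abe--Yoshinaga criterion ``$\CA$ is free iff the Ziegler restriction is free and the relevant $b_2$-type coefficients agree''), and the bookkeeping with $\chi(\CA,t)=(t-d)\chi(\CA^H,t)$, $d=|\CA|-|\CA^H|$, is fine. But the argument is not complete: the one claim everything is reduced to --- that freeness of the \emph{simple} restriction $\CA^H$ together with the divisibility forces freeness of the \emph{multi}arrangement $(\CA^H,m^\ast)$ with exponents $\{d,e_1,\dots,e_{\ell-2}\}$ --- is exactly the hard content of Abe's theorem, and you leave it unproven, offering only the hope of ``a strengthened induction'' or ``a multiarrangement analogue of the division mechanism.'' Abe supplies precisely this missing substance, via the Abe--Yoshinaga local--global inequality on second coefficients and a genuine inductive/localization analysis; reducing the theorem to an unproved statement of the same depth is not a proof.

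The base case is also not as ``soft'' as claimed. For $\ell=3$, Yoshinaga's criterion asks that $\chi(\CA,t)/(t-1)$ equal $(t-d_1)(t-d_2)$ where $(d_1,d_2)$ are the exponents of the rank-two Ziegler restriction, whereas the divisibility only gives the factorization $(t-d)(t-e_1)$ with $e_1=|\CA^H|-1$. The two pairs have the same sum, but identifying them (equivalently, matching the products $d\,e_1$ and $d_1d_2$) is precisely what must be proved; the available coefficient inequality goes the wrong way to force it formally (with fixed sum, the more balanced pair has the larger product), so the identity is not ``supplied immediately'' by the factorization. In short: correct frame, but the decisive steps --- both the general inductive step and the three-dimensional case --- are missing.
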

  
Theorem \ref{thm:abe-div} can be viewed as a strengthening of the addition
part of Theorem \ref{thm:add-del}. An iterative  
application leads to the class $\CDF$.

\begin{definition}
[{\cite[Def.~1.5]{abe:divfree}}]
\label{def:divfree}
An $\ell$-arrangement $\CA$ is called 
\emph{divisionally free} if $\ell \le 2$, $\CA = \Phi_\ell$, or
else there is a sequence of consecutive restrictions of arrangements
starting with $\CA$ and ending in a $2$-arrangement such that
the successive characteristic polynomials divide one another.
That is, there is a sequence of arrangements
$\CA = \CA_\ell, \CA_{\ell-1}, \ldots, \CA_2$
such that for each $i = 3, \ldots, \ell$
there is an $H_i$ in $\CA_i$ so that $\CA_i^{H_i} = \CA_{i-1}$
and $\chi(\CA_i^{H_i},t)$ divides $\chi(\CA_i,t)$.
Denote this class by $\CDF$.
\end{definition}

Thanks to Theorem \ref{thm:abe-div} 
and the fact that any $2$-arrangement is free, 
any $\CA$ in $\CDF$ is free.

In \cite[Thm.~1.6]{abe:divfree}, Abe observed that $\CIF \subsetneq \CDF$.
The reflection arrangement of the complex reflection group $G_{31}$ is 
divisionally free but not inductively free.

\begin{theorem}
[{\cite[Thm.~5.6]{abe:divfree}}]
\label{thm:akl-divfree}
Let $r, \ell \ge 3$. Then $\CA^k_\ell(r) \in \CDF$ if and only if $k \ne 0$.
\end{theorem}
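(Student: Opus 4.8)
The plan is to handle the two implications separately, after first recording the characteristic polynomial of every intermediate arrangement. For the latter I would argue by downward induction on $k$, starting from $\CA^\ell_\ell(r)=\CA(G(r,1,\ell))$, which Terao showed is free with $\exp=\{1,r+1,2r+1,\dots,(\ell-1)r+1\}$. The inductive step rests on one elementary restriction computation: for $1\le k\le\ell$ the coordinate hyperplane $\ker x_k$ lies in $\CA^k_\ell(r)$, and restricting $Q(\CA^k_\ell(r))=x_1\cdots x_k\prod_{i<j,\,n}(x_i-\zeta^nx_j)$ to it gives $x_1\cdots\widehat{x_k}\cdots x_\ell\prod_{i<j,\,i,j\ne k,\,n}(x_i-\zeta^nx_j)$, so that $(\CA^k_\ell(r))^{\ker x_k}=\CA(G(r,1,\ell-1))$, free with exponents $\{1,r+1,\dots,(\ell-2)r+1\}$. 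Since deleting $\ker x_k$ from $\CA^k_\ell(r)$ yields $\CA^{k-1}_\ell(r)$, Theorem~\ref{thm:add-del} applied to the triple $(\CA^k_\ell(r),\CA^{k-1}_\ell(r),\CA(G(r,1,\ell-1)))$ in the form (i)$+$(iii)$\Rightarrow$(ii) shows that all $\CA^k_\ell(r)$ are free with
\[
\exp\CA^k_\ell(r)=\{1,r+1,2r+1,\dots,(\ell-2)r+1\}\cup\{(\ell-1)r-\ell+k+1\},
\]
and hence, by Theorem~\ref{thm:factorization},
\[
\chi(\CA^k_\ell(r),t)=(t-1)\Big(\textstyle\prod_{j=1}^{\ell-2}\bigl(t-jr-1\bigr)\Big)\bigl(t-(\ell-1)r+\ell-k-1\bigr).
\]
(This exponent formula is classical and could instead be quoted from Orlik--Solomon.)

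For the direction $k\ne0$: fix $1\le k\le\ell$ and restrict $\CA^k_\ell(r)$ to $\ker x_1$, which is a hyperplane of $\CA^k_\ell(r)$ precisely because $k\ge1$. The same computation as above gives $(\CA^k_\ell(r))^{\ker x_1}=\CA(G(r,1,\ell-1))$, whose characteristic polynomial $(t-1)\prod_{j=1}^{\ell-2}(t-jr-1)$ divides $\chi(\CA^k_\ell(r),t)$ by the displayed formula. Now $\CA(G(r,1,\ell-1))$ is inductively free by Theorem~\ref{thm:indfree1}(i) (the group $G(r,1,\ell-1)$ has no irreducible factor on the excluded list), hence lies in $\CDF$ and therefore carries a chain of restrictions of the type required in Definition~\ref{def:divfree} down to a $2$-arrangement (for $\ell=3$ it already is one). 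Prepending $\CA^k_\ell(r)$ to that chain via the restriction $\ker x_1$, whose divisibility condition we have just verified, exhibits a divisional chain for $\CA^k_\ell(r)$, so $\CA^k_\ell(r)\in\CDF$.

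For the direction $k=0$ (where now $r,\ell\ge3$): the crucial observation is that \emph{every} hyperplane restriction of $\CA^0_\ell(r)=\CA(G(r,r,\ell))$ is isomorphic to $\CA^1_{\ell-1}(r)$. Writing a hyperplane as $H=\ker(x_i-\zeta^nx_j)$ and eliminating $x_j$ via $x_j=\zeta^{-n}x_i$, the family $\{x_i-\zeta^mx_j\}_m$ collapses to the single coordinate hyperplane $x_i=0$, every other braid hyperplane survives and is merely reindexed, and since the braid product is invariant up to a nonzero scalar under permuting coordinates, one obtains $\CA^1_{\ell-1}(r)$ after a coordinate relabeling. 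Thus if $\CA^0_\ell(r)$ lay in $\CDF$, the first step of a divisional chain would force $\chi(\CA^1_{\ell-1}(r),t)\mid\chi(\CA^0_\ell(r),t)$. From the displayed formula, both polynomials are divisible by $(t-1)\prod_{j=1}^{\ell-3}(t-jr-1)$, and cancelling this factor in $\BBQ[t]$ reduces the putative divisibility to $\bigl(t-((\ell-2)r-\ell+3)\bigr)\mid\bigl(t-((\ell-2)r+1)\bigr)\bigl(t-((\ell-1)r-\ell+1)\bigr)$, i.e.\ to $(\ell-2)r-\ell+3\in\{(\ell-2)r+1,\ (\ell-1)r-\ell+1\}$; the first alternative forces $\ell=2$ and the second $r=2$, both excluded. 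Hence $\CA^0_\ell(r)\notin\CDF$.

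The restriction computations and the exponent bookkeeping are routine; the one genuinely load-bearing idea is the uniformity exploited in the ``only if'' direction---that $\CA(G(r,r,\ell))$ has, up to isomorphism, the single hyperplane restriction $\CA^1_{\ell-1}(r)$---after which non-membership in $\CDF$ collapses to the elementary linear divisibility above. I expect the main obstacle to be carrying out the restriction $(\CA^0_\ell(r))^{\ker(x_i-\zeta^nx_j)}\cong\CA^1_{\ell-1}(r)$ (and its variant $(\CA^k_\ell(r))^{\ker x_k}=\CA(G(r,1,\ell-1))$) with full care: checking that the collapsing braid family produces exactly one coordinate hyperplane and that the braid product is symmetric enough to justify the coordinate relabeling.
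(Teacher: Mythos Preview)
The paper does not prove this theorem; it is quoted from Abe \cite[Thm.~5.6]{abe:divfree} and used as a black box, so there is no ``paper's own proof'' to compare against. Your argument is correct and self-contained. The forward direction (restrict $\CA^k_\ell(r)$ with $k\ge 1$ to a coordinate hyperplane to land in $\CA(G(r,1,\ell-1))$, which is inductively free and whose characteristic polynomial divides) is exactly the mechanism the paper itself invokes in Remark~\ref{rem:triples}, citing \cite[Prop.~2.11, Prop.~2.13]{orliksolomon:unitaryreflectiongroups}; so your approach coincides with the one implicit in the surrounding literature. For the converse, your observation that \emph{every} hyperplane restriction of $\CA(G(r,r,\ell))$ is isomorphic to $\CA^1_{\ell-1}(r)$ (this is \cite[Prop.~2.14]{orliksolomon:unitaryreflectiongroups}) is indeed the decisive point, and your divisibility check in $\BBQ[t]$ is valid: writing $\chi(\CA^1_{\ell-1}(r),t)=C\cdot(t-a)$ and $\chi(\CA^0_\ell(r),t)=C\cdot(t-b)(t-c)$ with the common factor $C=(t-1)\prod_{j=1}^{\ell-3}(t-jr-1)$, cancellation in the integral domain $\BBQ[t]$ reduces the question to $a\in\{b,c\}$, which forces $\ell=2$ or $r=2$. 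One small comment: your exponent formula for $\CA^k_\ell(r)$ is classical (it is \cite[Prop.~6.85]{orlikterao:arrangements}), so you could cite it rather than rederive it via iterated Addition--Deletion; either way is fine.
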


It is immediate from Theorem \ref{thm:akl-divfree} and 
\cite[Prop.\ 2.11]{orliksolomon:unitaryreflectiongroups} 
(cf.~\cite[Prop.\ 6.82]{orlikterao:arrangements})
that the class  $\CDF$ is not closed under restrictions.

Each of the classes of free, inductively free and recursively free
arrangements is compatible with 
the product construction for arrangements, 
cf.~\cite[Prop.\ 4.28]{orlikterao:arrangements}, 
\cite[Prop.~2.10]{hogeroehrle:inductivelyfree}, 
\cite[Thm.~2]{hogeroehrleschauenburg:localizations}.
We observe that this also holds for the class $\CDF$.

\begin{proposition}
\label{prop:product-divfree}
Let $(\CA_1, V_1),  (\CA_2, V_2)$ be two arrangements.
Then  $\CA = (\CA_1 \times \CA_2, V_1 \oplus V_2)$ is 
divisionally free if and only if both 
$\CA_1$ and $\CA_2$ are 
divisionally free.
\end{proposition}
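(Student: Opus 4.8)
The plan is to prove both implications by induction on $\ell = \dim(V_1 \oplus V_2)$, exploiting the restriction formula \eqref{eq:restrproduct} together with the multiplicativity of the characteristic polynomial \eqref{eq:chiproduct}. The base cases $\ell \le 2$ are immediate from the definition of $\CDF$, as is the case where one of the factors is empty (since $\CA_1 \times \Phi_0 \cong \CA_1$). So assume $\ell \ge 3$ and that both $\CA_1$ and $\CA_2$ are nonempty and nontrivial.

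For the ``if'' direction, suppose both $\CA_1$ and $\CA_2$ are divisionally free. Pick a divisional chain for, say, $\CA_1$: there is $H_1 \in \CA_1$ with $\chi(\CA_1^{H_1},t) \mid \chi(\CA_1,t)$ and $\CA_1^{H_1} \in \CDF$. Set $H := H_1 \oplus V_2 \in \CA$. By \eqref{eq:restrproduct}, $\CA^H = \CA_1^{H_1} \times \CA_2$, and by \eqref{eq:chiproduct},
\[
\chi(\CA^H,t) = \chi(\CA_1^{H_1},t)\cdot\chi(\CA_2,t) \ \big|\ \chi(\CA_1,t)\cdot\chi(\CA_2,t) = \chi(\CA,t).
\]
By the inductive hypothesis $\CA^H = \CA_1^{H_1} \times \CA_2$ is divisionally free (it has dimension $\ell - 1$ and both factors are divisionally free), so $\CA$ is divisionally free. (One must also handle the degenerate possibility that $\CA_1$ is a $1$- or $2$-arrangement with no available $H_1$; in that small case one instead peels a hyperplane off $\CA_2$, and if both factors have dimension $\le 2$ then $\ell \le 4$ and one checks directly, or notes that a product of divisionally free arrangements of dimension $\le 2$ is inductively free, hence in $\CDF$.)

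For the ``only if'' direction, suppose $\CA = \CA_1 \times \CA_2$ is divisionally free. Then there is a hyperplane $H \in \CA$ with $\CA^H \in \CDF$ and $\chi(\CA^H,t) \mid \chi(\CA,t)$. By the product structure, $H = H_1 \oplus V_2$ for some $H_1 \in \CA_1$ or $H = V_1 \oplus H_2$ for some $H_2 \in \CA_2$; say the former. Then $\CA^H = \CA_1^{H_1} \times \CA_2$, and by the inductive hypothesis both $\CA_1^{H_1}$ and $\CA_2$ are divisionally free; in particular $\CA_2 \in \CDF$, which is half of what we want. It remains to deduce $\CA_1 \in \CDF$. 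The key observation is that $\chi(\CA_1^{H_1},t) \mid \chi(\CA_1,t)$: since $\BBZ[t]$ is a UFD and $\chi(\CA_2,t)$ is a common factor, the divisibility $\chi(\CA_1^{H_1},t)\chi(\CA_2,t) \mid \chi(\CA_1,t)\chi(\CA_2,t)$ cancels to give it. Combined with $\CA_1^{H_1} \in \CDF$, this exhibits $\CA_1$ as divisionally free via the hyperplane $H_1$.

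The main obstacle is bookkeeping at the boundary of the induction: the definition of $\CDF$ treats $\ell \le 2$ as an unconditional stopping case, so a divisional chain for $\CA$ need not have length tied cleanly to a chain for each factor, and when a factor has dimension $1$ or $2$ one cannot always ``peel from that side.'' The cleanest fix is to phrase the induction on $\ell$ and always peel a hyperplane from whichever factor has dimension $\ge 3$ (at least one does, once $\ell \ge 5$), reducing the finitely many residual small-dimensional products to the already-known fact that finite products of divisionally free arrangements of dimension $\le 2$ lie in $\CIF \subseteq \CDF$. Everything else — the restriction identity and the UFD cancellation — is routine.
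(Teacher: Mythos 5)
Your proof is correct and takes essentially the same route as the paper's: peel a hyperplane of the form $H_1 \oplus V_2$, apply \eqref{eq:restrproduct} and \eqref{eq:chiproduct}, and in the converse cancel the common factor $\chi(\CA_2,t)$; the only difference is that the paper inducts on $|\CA|$ rather than on the dimension $\ell$, which is immaterial. Your explicit treatment of the boundary cases (factors of dimension at most $2$ or empty factors, handled via $\CIF \subseteq \CDF$ and the closedness of $\CIF$ under products) is sound and in fact spells out a point the paper's ``without loss'' passes over silently.
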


\begin{proof}
First suppose that both $\CA_1$ and $\CA_2$ are divisionally free.
Then we claim that also $\CA$ is so.
We argue via induction on $|\CA|$. If both $\CA_1$ and $\CA_2$ are empty, 
there is nothing to show.
So suppose that $|\CA| \ge 1$ and 
that the claim holds for any 
product of divisionally free arrangements with fewer   than $|\CA|$ 
hyperplanes.
Without loss of generality there exists an $H_1$ in $\CA_1$ 
such that $\CA_1^{H_1}$ belongs to $\CDF$
and $\chi( \CA_1^{H_1}, t)$ divides $\chi( \CA_1, t)$.
Letting $H = H_1 \oplus V_2 \in \CA$, by \eqref{eq:restrproduct}, 
$\CA^H = \CA_1^{H_1} \times \CA_2$ is a product of 
divisionally free arrangements with $|\CA^H| < |\CA|$. 
So, by our induction hypothesis, 
$\CA^H$ is divisionally free.
In addition, since 
$\chi( \CA^{H}, t) = \chi( \CA_1^{H_1}, t) \cdot \chi( \CA_2, t)$ divides 
$\chi( \CA_1, t) \cdot \chi( \CA_2, t) = \chi( \CA, t)$, 
cf.~\eqref{eq:chiproduct}, 
we infer that $\CA$ belongs to $\CDF$.

Conversely, suppose that 
$\CA = \CA_1 \times \CA_2$ belongs to 
$\CDF$. We claim that then both $\CA_1$ and $\CA_2$ also belong to $\CDF$.
Again we argue by induction on $|\CA|$.
If both $\CA_1$ and $\CA_2$ are empty, 
there is nothing to show.
So suppose that $|\CA| \ge 1$ 
and that the claim holds for any 
product in $\CDF$ with fewer than $|\CA|$ hyperplanes.
Without loss of generality we may assume that there is an 
$H = H_1 \oplus V_2$ in $\CA$ such that $\CA^H$ belongs to $\CDF$
and $\chi( \CA^H, t)$ divides $\chi( \CA, t)$.
Since $|\CA^H| < |\CA|$ and  
$\CA^H = \CA_1^{H_1} \times \CA_2$, 
both $\CA_1^{H_1}$ and $\CA_2$ belong to $\CDF$, by our 
induction hypothesis.
Moreover,  since 
$\chi( \CA^{H}, t) = \chi( \CA_1^{H_1}, t) \cdot \chi( \CA_2, t)$ 
and 
$\chi( \CA, t) = \chi( \CA_1, t) \cdot \chi( \CA_2, t)$,
cf.~\eqref{eq:chiproduct}, 
it follows that 
$\chi( \CA_1^{H_1}, t)$ divides $\chi( \CA_1, t)$.
Therefore, also $\CA_1$ belongs to $\CDF$.
\end{proof}

There is a hereditary version of $\CDF$. 

\begin{definition}
[{\cite[Def.~5.7(2)]{abe:divfree}}]
\label{def:hereddivfree}
The arrangement $\CA$ is called 
\emph{hereditarily divisionally free} provided 
that $\CA^X$ is divisionally free for each $X \in L(\CA)$.
We denote this class by $\HDF$.
\end{definition}

\begin{remark}
\label{rem:hereddiv}
(i).  Clearly, since $\CIF \subseteq \CDF$, we have $\HIF \subseteq \HDF$.

(ii). Note that $\HDF \subsetneq \CDF$. For, in \cite[Ex.~2.16]{hogeroehrle:inductivelyfree},
we constructed an inductively free arrangement $\CA$ which admits a hyperplane $H$ such 
that $\CA^H$ is not free. In particular, $\CA \in \CDF \setminus \HDF$.
\end{remark}

The compatibility with 
products from Proposition \ref{prop:product-divfree}
descends to $\HDF$.
The proof follows readily from Proposition \ref{prop:product-divfree} and 
\eqref{eq:restrproduct}.

\begin{corollary}
\label{cor:product-hereddivfree}
Let $\CA_1,  \CA_2$ be two arrangements.
Then  $\CA = \CA_1 \times \CA_2$ is 
hereditarily divisionally free if and only if both 
$\CA_1$ and $\CA_2$ are 
hereditarily divisionally free.
\end{corollary}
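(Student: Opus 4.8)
The plan is to derive Corollary~\ref{cor:product-hereddivfree} directly from Proposition~\ref{prop:product-divfree} together with the lattice isomorphism $L(\CA_1) \times L(\CA_2) \cong L(\CA)$ and the restriction formula \eqref{eq:restrproduct}. The statement is an ``if and only if'', so I would argue the two implications separately, but in fact the argument is essentially symmetric once the bookkeeping over $L(\CA)$ is set up.

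For the forward direction, suppose $\CA = \CA_1 \times \CA_2$ is hereditarily divisionally free. I want to show $\CA_1$ is hereditarily divisionally free (the argument for $\CA_2$ being identical). Fix an arbitrary $X_1 \in L(\CA_1)$; I must show $\CA_1^{X_1} \in \CDF$. The trick is to pair $X_1$ with $V_2 \in L(\CA_2)$: then $X = X_1 \oplus V_2 \in L(\CA)$ via the isomorphism \cite[Prop.~2.14]{orlikterao:arrangements}, and \eqref{eq:restrproduct} gives $\CA^X = \CA_1^{X_1} \times \CA_2^{V_2} = \CA_1^{X_1} \times \CA_2$. Since $\CA$ is hereditarily divisionally free, $\CA^X \in \CDF$, so by Proposition~\ref{prop:product-divfree} both factors $\CA_1^{X_1}$ and $\CA_2$ lie in $\CDF$; in particular $\CA_1^{X_1} \in \CDF$. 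As $X_1$ was arbitrary, $\CA_1 \in \HDF$.

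For the converse, suppose both $\CA_1$ and $\CA_2$ are hereditarily divisionally free; I must show $\CA^X \in \CDF$ for every $X \in L(\CA)$. By the lattice isomorphism, any such $X$ has the form $X = X_1 \oplus X_2$ with $X_i \in L(\CA_i)$, and then \eqref{eq:restrproduct} yields $\CA^X = \CA_1^{X_1} \times \CA_2^{X_2}$. Since $\CA_1 \in \HDF$ we have $\CA_1^{X_1} \in \CDF$, and similarly $\CA_2^{X_2} \in \CDF$; hence Proposition~\ref{prop:product-divfree} gives $\CA^X = \CA_1^{X_1} \times \CA_2^{X_2} \in \CDF$. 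This holds for all $X \in L(\CA)$, so $\CA \in \HDF$.

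There is no real obstacle here: the only thing that needs care is making sure every element of $L(\CA)$ is genuinely captured — both ``$V_2$ is in $L(\CA_2)$'' in the forward direction (it is, as the intersection of the empty subfamily) and ``every $X \in L(\CA)$ decomposes'' in the converse, both of which are exactly \cite[Prop.~2.14]{orlikterao:arrangements}. So the write-up is short and, as the excerpt already indicates, ``follows readily'' from Proposition~\ref{prop:product-divfree} and \eqref{eq:restrproduct}; I would keep it to a few lines and simply cite those two facts at the relevant points.
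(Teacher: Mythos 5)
Your argument is correct and is exactly the one the paper intends: the corollary ``follows readily'' from Proposition~\ref{prop:product-divfree} together with the lattice isomorphism $L(\CA_1)\times L(\CA_2)\cong L(\CA)$ and the restriction formula \eqref{eq:restrproduct}, which is precisely what you spell out in both directions. No gaps; your write-up simply makes explicit the details the paper leaves to the reader.
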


While the classes of free, inductively free and recursively free arrangements
are closed under taking localizations,  
cf.~\cite[Thm.~4.37]{orlikterao:arrangements}, 
\cite[Thm.~1]{hogeroehrleschauenburg:localizations}, 
unexpectedly, 
this property fails for $\CDF$, as
the following example illustrates.

\begin{example}
\label{ex:divlocalization}
Let $\CA = \CA^1_\ell(r)$ for $r \ge 3$ and $\ell \ge 4$.
Let 
\[
X = \bigcap\limits_{\substack{2 \leq i < j \leq \ell\\ 0 \leq n < r}} \ker(x_i - \zeta^nx_j),
\]
where $\zeta$ is a primitive $r\th$ root of unity.
Then $\CA_X \cong \CA^0_{\ell-1}(r)$.
By Theorem \ref{thm:akl-divfree},
$\CA$ is divisionally free but $\CA_X$ is not.
\end{example}

\begin{remark}
\label{rem:triples}
If $\CA \ne \Phi_\ell$ is inductively free and $(\CA, \CA', \CA'')$ 
is a triple as in Definition \ref{def:indfree}(ii), then 
each member of 
$(\CA, \CA', \CA'')$ belongs to $\CIF$. 
In contrast, if 
$\CA$ and $\CA''$ are consecutive members in a sequence
as in Definition \ref{def:divfree}, then 
the deletion $\CA'$ 
in the corresponding triple $(\CA, \CA', \CA'')$ 
need not belong to $\CDF$.
For instance, let $\CA = \CA^1_\ell(r)$ for $r, \ell \ge 3$ 
and $H = \ker(x_1)$.
Then $\CA^H \cong \CA^{\ell-1}_{\ell-1}(r)$, 
by \cite[Prop.\ 2.11]{orliksolomon:unitaryreflectiongroups}. 
It follows from 
\cite[Prop.\ 2.13]{orliksolomon:unitaryreflectiongroups} 
(or else from \cite[Prop.~6.85]{orlikterao:arrangements}
and Theorem \ref{thm:factorization})
that $\chi(\CA^H,t)$ divides $\chi(\CA,t)$.
Consequently, thanks to Theorem \ref{thm:akl-divfree},
$\CA$ and $\CA''$ are successive terms in a sequence
as in Definition \ref{def:divfree}.
However, $\CA' = \CA^0_\ell(r)$ is not divisionally free, by 
Theorem \ref{thm:akl-divfree}.
\end{remark}

\section{Divisionally Free Reflection Arrangements}
\label{sec:proofs}

In view of Theorem \ref{thm:indfree1}(i)
and Proposition \ref{prop:product-divfree}, we can restate 
Abe's classification of the
divisionally free reflection arrangements \cite[Cor.~4.7]{abe:divfree}
as follows.

\begin{theorem}
\label{thm:divfreereflections}
For  $W$ a finite, complex reflection group,
its reflection arrangement $\CA(W)$ is
divisionally free if and only if 
$W$ does not admit an irreducible factor
isomorphic to a monomial group 
$G(r,r,\ell)$ for $r, \ell \ge 3$, 
$G_{24}, G_{27}, G_{29}, G_{33}$, or $G_{34}$.
\end{theorem}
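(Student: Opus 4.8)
The plan is to deduce Theorem~\ref{thm:divfreereflections} from Abe's classification of irreducible divisionally free reflection arrangements \cite[Cor.~4.7]{abe:divfree} (stated below as what becomes Theorem~\ref{thm:divfreereflections} in its irreducible form) together with the product-compatibility of $\CDF$ established in Proposition~\ref{prop:product-divfree}. The point is purely bookkeeping: Abe's corollary concerns irreducible $W$, whereas the statement here is for arbitrary finite complex reflection groups, and one must reconcile the list ``$G(r,r,\ell)$ for $r,\ell\ge 3$, $G_{24}, G_{27}, G_{29}, G_{33}, G_{34}$'' with the list ``$G(r,r,\ell)$ for $r,\ell\ge3$, $G_{24}, G_{27}, G_{29}, G_{31}, G_{33}, G_{34}$'' appearing in Theorem~\ref{thm:indfree1}(i) for inductive freeness. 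The discrepancy is exactly $G_{31}$, which by \cite[Thm.~1.6]{abe:divfree} is divisionally free but not inductively free, and this is precisely the content alluded to in the sentence preceding Theorem~\ref{thm:akl-divfree} in the excerpt.

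The first step is the reduction to the irreducible case. Any finite complex reflection group $W$ decomposes as a product $W = W_1 \times \dots \times W_k$ of irreducible reflection groups acting on complementary subspaces, and correspondingly $\CA(W) = \CA(W_1) \times \dots \times \CA(W_k)$ as a product of arrangements (with $\Phi_\ell$-factors absorbing any fixed-space summand). By Proposition~\ref{prop:product-divfree}, applied inductively on the number of factors, $\CA(W)$ is divisionally free if and only if each $\CA(W_i)$ is divisionally free. Since any rank-$\le 2$ irreducible reflection arrangement is trivially divisionally free (it is a $\le 2$-arrangement, hence in $\CDF$ by Definition~\ref{def:divfree}), the only factors that can obstruct freeness are the higher-rank irreducibles, and for these we invoke Abe's irreducible classification directly.

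The second step is to record, for the irreducible case, exactly which $\CA(W_i)$ fail to be divisionally free: by \cite[Cor.~4.7]{abe:divfree} these are precisely $\CA(G(r,r,\ell))$ for $r,\ell \ge 3$ together with $\CA(G_{24}), \CA(G_{27}), \CA(G_{29}), \CA(G_{33}), \CA(G_{34})$. Combining with the product reduction, $\CA(W)$ is divisionally free if and only if no irreducible factor $W_i$ of $W$ is isomorphic to one of $G(r,r,\ell)$ ($r,\ell\ge3$), $G_{24}, G_{27}, G_{29}, G_{33}, G_{34}$, which is the assertion. One should also note for completeness that for $\ell \le 2$ the monomial groups $G(r,r,\ell)$ and the exceptional groups on the list do not arise as higher-rank obstructions, so there is no edge-case collision; and that the passage from Theorem~\ref{thm:indfree1}(i)'s list to the present one amounts to removing $G_{31}$, consistent with $\CIF \subsetneq \CDF$.

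Honestly, there is no substantive obstacle here: the theorem is a restatement, and the only care needed is in the translation between ``no irreducible factor isomorphic to'' (a statement about $W$) and ``each irreducible factor's reflection arrangement lies in $\CDF$'' (a statement about $\CA(W)$), which is handled cleanly by Proposition~\ref{prop:product-divfree} and the standard decomposition $\CA(W) = \prod_i \CA(W_i)$. The one place to be slightly attentive is making sure that the exponents/characteristic-polynomial data play no role beyond what Proposition~\ref{prop:product-divfree} already encapsulates --- but since that proposition is an ``if and only if'' at the level of the arrangements themselves, nothing further is required.
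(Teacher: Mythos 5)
Your proposal is correct and follows essentially the same route as the paper, which obtains Theorem~\ref{thm:divfreereflections} by combining Abe's classification for irreducible reflection groups \cite[Cor.~4.7]{abe:divfree} with the product compatibility of $\CDF$ from Proposition~\ref{prop:product-divfree}, using Theorem~\ref{thm:indfree1}(i) only to reconcile the list with the inductively free case (the sole difference being $G_{31}$). Nothing further is needed.
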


The counterpart to Theorem \ref{thm:indfree1}(ii)
also holds in the setting of divisional freeness:

\begin{theorem}
\label{thm:divheredfreereflections}
The reflection arrangement of a finite, 
complex reflection group
is divisionally free if and only if 
it is hereditarily divisionally free.
\end{theorem}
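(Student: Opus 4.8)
The plan is to prove the two implications separately, with the forward direction (divisionally free $\Rightarrow$ hereditarily divisionally free) being the substantive one, since the reverse is immediate from $\CA(W)^V = \CA(W)$. For the forward direction I would first reduce to the irreducible case: if $W = W_1 \times \cdots \times W_k$, then $\CA(W) = \CA(W_1) \times \cdots \times \CA(W_k)$, and by Corollary \ref{cor:product-hereddivfree} together with the fact (already noted in the excerpt, via Proposition \ref{prop:product-divfree} and Theorem \ref{thm:indfree1}(i)) that $\CA(W) \in \CDF$ forces each $\CA(W_i) \in \CDF$, it suffices to show each irreducible factor is hereditarily divisionally free; the restriction of a product is a product of restrictions by \eqref{eq:restrproduct}, so hereditary divisional freeness of the factors gives it for $\CA(W)$.

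So assume $W$ is irreducible and $\CA(W) \in \CDF$, i.e.\ by Theorem \ref{thm:divfreereflections} $W$ is not $G(r,r,\ell)$ with $r,\ell\ge 3$, nor $G_{24}, G_{27}, G_{29}, G_{33}, G_{34}$. I would then invoke Theorem \ref{thm:indfree1}(ii): for every such $W$ \emph{except} $G_{31}$, the reflection arrangement $\CA(W)$ is inductively free, hence hereditarily inductively free, hence (Remark \ref{rem:hereddiv}(i)) hereditarily divisionally free, and we are done. This leaves the single genuinely new case $W = G_{31}$, where $\CA(W)$ is divisionally free but not inductively free. Here I would need to show $\CA(G_{31})^X \in \CDF$ for every $X \in L(\CA(G_{31}))$. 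One route is to appeal to the known classification of the restrictions $\CA(G_{31})^X$ up to isomorphism (these appear in the tables of \cite{orliksolomon:unitaryreflectiongroups} / \cite{amendhogeroehrle:indfree}): for $\dim X \le 2$ every $2$-arrangement is divisionally free by definition; for $\dim X = 3$ freeness combined with the fact that every free $3$-arrangement lies in $\CDF$ (a free $3$-arrangement is automatically divisionally free, since one drops from dimension $3$ to a free $2$-arrangement whose characteristic polynomial divides by the Factorization Theorem \ref{thm:factorization} and a degree count) settles it; and for $\dim X = 4$ the only proper restriction of $\CA(G_{31})$ of dimension $4$ that arises — one checks from the restriction table that $\CA(G_{31})^X$ for a hyperplane $X$ is an inductively free $4$-arrangement — so it is in $\CDF$. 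Assembling: every $X$ gives $\CA(G_{31})^X \in \CDF$, so $\CA(G_{31}) \in \HDF$.

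The main obstacle is the $G_{31}$ case: unlike all the other admissible $W$, its reflection arrangement is not covered by the inductive-freeness results, so one must actually examine its lattice of restrictions. The key sub-lemma to establish cleanly (and state explicitly, as it is used twice above) is that \emph{every free arrangement of rank at most $3$ is divisionally free}; this follows because from a free $\ell$-arrangement with $\ell \le 3$ one restricts to a hyperplane to obtain a free arrangement of rank $\le 2$ — free since all such are — and then Theorems \ref{thm:factorization} and \ref{thm:abe-div} give the required divisibility of characteristic polynomials after matching degrees, so the defining chain in Definition \ref{def:divfree} exists. With that lemma in hand, only the finitely many restrictions of $\CA(G_{31})$ of rank $4$ need separate attention, and for those one cites the explicit description (they are inductively free, by Theorem \ref{thm:indfree}(a), as they occur as restrictions with the stated combinatorial shape, or by direct addition-deletion). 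Finally I would record the reverse implication in one sentence: if $\CA(W)$ is hereditarily divisionally free then in particular $\CA(W) = \CA(W)^V \in \CDF$, completing the equivalence.
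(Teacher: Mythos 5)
Your overall architecture (reduce to the irreducible case via Proposition \ref{prop:product-divfree} and Corollary \ref{cor:product-hereddivfree}, dispose of every admissible $W$ except $G_{31}$ through Theorem \ref{thm:indfree1}(ii) and $\HIF \subseteq \HDF$, then treat $G_{31}$ separately) is exactly the paper's strategy; the paper settles $G_{31}$ by citing \cite[Prop.~5.8]{abe:divfree}. But your own treatment of $G_{31}$ rests on a false key sub-lemma: it is \emph{not} true that every free arrangement of rank at most $3$ is divisionally free. The paper itself supplies counterexamples: $\CA(G(r,r,3))=\CA^0_3(r)$ for $r\ge 3$ is free (it is a reflection arrangement) but not in $\CDF$ by Theorem \ref{thm:akl-divfree}, and likewise the rank-$3$ reflection arrangements $\CA(G_{24})$, $\CA(G_{27})$ are free but excluded by Theorem \ref{thm:divfreereflections}. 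The flaw in your justification is the ``degree count'': for a $3$-arrangement the restriction to $H$ has $\chi(\CA^H,t)=(t-1)\bigl(t-|\CA^H|+1\bigr)$, and nothing in Theorem \ref{thm:factorization} forces $|\CA^H|-1$ to be an exponent of $\CA$; e.g.\ for $\CA(G(3,3,3))$ one has $\exp\CA=\{1,4,4\}$ while every hyperplane restriction has $4$ lines, so $(t-1)(t-3)$ never divides $(t-1)(t-4)^2$. Since your rank-$3$ case for $\CA(G_{31})^X$ is handled only by this lemma, the argument has a genuine gap.

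There is also a structural slip in the $G_{31}$ case: $G_{31}$ has rank $4$, so every proper restriction $\CA(G_{31})^X$ ($X\ne V$) has $\dim X\le 3$; there are no proper $4$-dimensional restrictions, and the restriction to a hyperplane is a $3$-arrangement, not a $4$-arrangement as you assert. The fix is short and already within the results you quote: for $\dim X\le 2$ use Definition \ref{def:divfree}, and for $\dim X=3$ invoke Theorem \ref{thm:indfree}(a)(iii), which states that all restrictions of $\CA(G_{31})$ with $\dim X\le 3$ are inductively free, hence lie in $\CDF$; together with $\CA^V=\CA\in\CDF$ this gives $\CA(G_{31})\in\HDF$ (alternatively cite \cite[Prop.~5.8]{abe:divfree}, as the paper does). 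With that substitution your proof becomes correct and essentially coincides with the paper's.
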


\begin{proof}
The forward implication follows from 
Proposition \ref{prop:product-divfree},
Theorems \ref{thm:indfree1}(ii) and \ref{thm:divfreereflections}, 
and \cite[Prop.~5.8]{abe:divfree}.
The reverse implication is obvious.
\end{proof}

We now consider restrictions of reflection arrangements.
Thanks to Proposition \ref{prop:product-divfree} and 
\eqref{eq:restrproduct}, the question of divisional   
freeness of $\CA^X$ reduces readily to the case when
$\CA$ is irreducible, so we may assume that $W$ is irreducible.
In view of Theorem \ref{thm:indfree}(a), 
we can formulate our classification as follows:

\begin{theorem}
\label{thm:divfreerestrictions}
Let $W$ be a finite, irreducible, complex 
reflection group with reflection arrangement 
$\CA(W)$ and let $V \ne X \in L(\CA(W))$. 
Then $\CA(W)^X$ is divisionally free 
if and only if one of the following holds:
\begin{itemize}
\item[(i)] 
$\CA(W)^X$ is inductively free;
\item[(ii)] 
$W = G(r,r,\ell)$ and 
$\CA(W)^X \cong \CA^k_p(r)$, where $p = \dim X$ and $1 \leq k \leq p-3$; 
\item[(iii)] 
$W = G_{33}$ or $G_{34}$ and $\dim X = 4$.
\end{itemize}
\end{theorem}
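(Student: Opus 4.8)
The plan is to combine the classification of inductively free restrictions in Theorem \ref{thm:indfree} with the known divisionally free data for the intermediate arrangements $\CA^k_p(r)$ and a small amount of case analysis for the exceptional groups. Since Proposition \ref{prop:product-divfree} and \eqref{eq:restrproduct} reduce everything to the irreducible case, I would organize the argument around the trichotomy in the statement of Theorem \ref{thm:indfree}(a), treating each family of potential counterexamples to ``$\CDF = \CIF$ on restrictions'' separately. The sufficiency of (i) is immediate since $\CIF \subseteq \CDF$; the real content is (ii) and (iii), together with showing that nothing else occurs.

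For the sufficiency of (ii), I would recall from \cite[Prop.~6.84]{orlikterao:arrangements} (or \cite[Prop.~2.14]{orliksolomon:unitaryreflectiongroups}) that the restrictions of $\CA(G(r,r,\ell))$ to subspaces of dimension $p$ are, up to isomorphism, exactly the arrangements $\CA^k_p(r)$ for $0 \le k \le p$; Theorem \ref{thm:akl-divfree} then says $\CA^k_p(r) \in \CDF$ precisely when $k \ne 0$ (for $r,p \ge 3$), which already covers $1 \le k \le p-3$. I should be a little careful about the low-dimensional boundary cases $p \le 2$, but there $\CA^k_p(r)$ is automatically divisionally free, and the cases $p-2 \le k \le p$ are exactly the inductively free ones from Theorem \ref{thm:indfree}(a)(ii), so they already fall under (i); thus (ii) records precisely the genuinely new divisionally free restrictions arising from the monomial groups. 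The sufficiency of (iii) is a finite check: for $W = G_{33}$ or $G_{34}$ one must exhibit, for each $X$ with $\dim X = 4$, a chain of restrictions down to a $2$-arrangement with successively dividing characteristic polynomials; I would lean on Abe's analysis of $G_{31}$ and the tables/lattice data for $G_{33}, G_{34}$ (the characteristic polynomials of the relevant $\CA(W)^X$ factor linearly by Theorem \ref{thm:factorization} once one knows these restrictions are free), together with the fact that restrictions of dimension $\le 3$ are already inductively free by Theorem \ref{thm:indfree}(a)(iii) and hence divisionally free.

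The harder direction is necessity: showing that if $\CA(W)^X$ is divisionally free but not inductively free, then it must be one of the arrangements in (ii) or (iii). Here I would argue case by case along the list of irreducible complex reflection groups $W$ for which some restriction fails to be inductively free — by Theorem \ref{thm:indfree}(a) these are exactly $W = G(r,r,\ell)$ with $r \ge 3$ (restrictions $\CA^k_p(r)$ with $k < p-2$, including $k=0$) and $W = G_{24}, G_{27}, G_{29}, G_{31}, G_{33}, G_{34}$ (restrictions of dimension $\ge 4$). For the monomial groups, Theorem \ref{thm:akl-divfree} pins down divisional freeness of $\CA^k_p(r)$ exactly: the non-inductively-free, divisionally free ones are precisely $1 \le k \le p-3$ (with $k=0$ excluded by the theorem and $k = p-2$ landing back in $\CIF$), giving (ii). For the exceptional groups the necessity reduces to a finite computation: one must verify that for $G_{24}, G_{27}, G_{29}, G_{31}$ no restriction of dimension $\ge 4$ is divisionally free, and that for $G_{33}, G_{34}$ exactly the dimension-$4$ restrictions are (those of higher dimension, namely the full reflection arrangement in the $G_{34}$ case, failing by Theorem \ref{thm:divfreereflections}). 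I expect this exceptional-group verification — checking the division condition, or its failure, for each $X \in L(\CA(W))$ of dimension $\ge 4$, presumably with computer assistance via the characteristic polynomials of all such $\CA(W)^X$ — to be the main obstacle, since it requires knowing both the freeness status and the explicit characteristic polynomials of all these restrictions; the monomial-group part, by contrast, is essentially a bookkeeping exercise on top of Theorems \ref{thm:indfree} and \ref{thm:akl-divfree}.
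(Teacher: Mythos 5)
Your outline for (i), for the $G(r,r,\ell)$ case, and for the positive check in (iii) is essentially the paper's argument (Orlik--Solomon's description of the restrictions, Theorem \ref{thm:indfree}(a)(ii) together with Theorem \ref{thm:akl-divfree} for the monomial groups, and the Orlik--Solomon data for $G_{33}$, $G_{34}$). The genuine gap is in the necessity direction for the exceptional groups, specifically the case $W = G_{34}$, $\dim X = 5$. You dismiss the restrictions of dimension greater than $4$ as ``the full reflection arrangement in the $G_{34}$ case, failing by Theorem \ref{thm:divfreereflections}.'' That is not the relevant object: since $X \ne V$, the full reflection arrangement never occurs, and the case that must be excluded is the proper restriction $\CA(G_{34})^X$ with $\dim X = 5$ (the restriction to a reflecting hyperplane). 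This $5$-arrangement is not a reflection arrangement, so Theorem \ref{thm:divfreereflections} says nothing about it. Ruling it out is precisely the nontrivial negative verification: its restrictions to hyperplanes are the arrangements $\CA(G_{34})^Y$ with $\dim Y = 4$, which \emph{are} divisionally free, so one must check that $\chi(\CA(G_{34})^Y,t)$ fails to divide $\chi(\CA(G_{34})^X,t)$ for every such $Y$; the paper does this using the freeness of all restrictions and the exponent data in \cite[Thm.~1.2, Tables 10, 11]{orliksolomon:unitaryreflectiongroups} together with Theorem \ref{thm:factorization}. Without this step the ``only if'' direction (equivalently, the exception recorded in Theorem \ref{thm:divfreereflections2}) is unproved.

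A smaller point: your proposed verification that for $G_{24}, G_{27}, G_{29}, G_{31}$ ``no restriction of dimension $\ge 4$ is divisionally free'' is vacuous and misdirected. These groups have ranks $3,3,4,4$, so every proper restriction has dimension at most $3$ and is inductively free by Theorem \ref{thm:indfree}(a)(iii), hence already falls under (i); there is nothing to check there (and $\CA(G_{31})$ itself is divisionally free, but $X = V$ is excluded). Likewise, proper restrictions of $\CA(G(r,r,\ell))$ are the $\CA^k_p(r)$ with $1 \le k \le p$, not $0 \le k \le p$, as in \cite[Prop.~2.14]{orliksolomon:unitaryreflectiongroups}; this slip is harmless for your argument, but the rank bookkeeping above is what concentrates the entire exceptional-group analysis on $G_{33}$ and $G_{34}$ in dimensions $4$ and $5$.
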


\begin{proof}
If $\CA(W)^X$ is inductively free, the result follows, 
since $\CIF \subseteq \CDF$.

Now suppose that $\CA(W)^X$ is not inductively free.
For $W = G(r,r,\ell)$, 
every restriction is of the form 
$\CA(W)^X \cong \CA^k_p(r)$, where $p = \dim X$ and $1 \leq k \leq p$,
by \cite[Prop.\ 2.14]{orliksolomon:unitaryreflectiongroups}. 
Thus (ii) follows from Theorems
\ref{thm:indfree}(a)(ii) and \ref{thm:akl-divfree}.
 
For $W = G_{33}$ or $G_{34}$ the result 
follows from 
\cite[Thm.~1.2]{orliksolomon:unitaryreflectiongroups},
\cite[Tables 10, 11]{orliksolomon:unitaryreflectiongroups},
along with Theorem \ref{thm:indfree}(a)(iii).
\end{proof}

In view of 
Proposition \ref{prop:product-divfree} and 
Theorem \ref{thm:indfree}, 
we can restate Theorem \ref{thm:divfreerestrictions} as follows.

\begin{theorem}
\label{thm:divfreereflections2}
Let $W$ be a finite, irreducible, complex 
reflection group 
and let $V \ne X \in L(\CA(W))$. 
Then $\CA(W)^X$ is divisionally free 
unless $W = G_{34}$ and $\dim X = 5$.
\end{theorem}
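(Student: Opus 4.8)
The plan is to derive Theorem \ref{thm:divfreereflections2} as a reformulation of Theorem \ref{thm:divfreerestrictions}, so the proof reduces to bookkeeping over the cases enumerated there together with Theorem \ref{thm:indfree}. First I would observe that, by Theorem \ref{thm:divfreerestrictions}, for an irreducible $W$ and $V \ne X \in L(\CA(W))$ the restriction $\CA(W)^X$ fails to be divisionally free precisely when it is \emph{not} inductively free and none of the conditions (ii), (iii) of that theorem applies. Thus the bulk of the argument is to rule out every pair $(W, X)$ except $W = G_{34}$, $\dim X = 5$: whenever $\CA(W)^X \notin \CIF$, I must exhibit either a realisation $\CA(W)^X \cong \CA^k_p(r)$ with $1 \le k \le p-3$, or $W \in \{G_{33}, G_{34}\}$ with $\dim X = 4$.

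Next I would split into the standard families. For $W$ of Coxeter type or a monomial group $G(r,1,\ell)$ (equivalently $G(r,r,\ell)$ excluded), I would invoke Theorem \ref{thm:indfree1}(i) and Theorem \ref{thm:indfree}(a): for all such $W$ other than $G(r,r,\ell)$ with $r,\ell \ge 3$, the arrangement $\CA(W)$ is inductively free, hence (by Theorem \ref{thm:indfree1}(ii) and Theorem \ref{thm:indfree}(b)) \emph{hereditarily} inductively free, so every restriction lies in $\CIF \subseteq \CDF$ and case (i) covers them. For $W = G(r,r,\ell)$ with $r, \ell \ge 3$, every restriction is of the form $\CA^k_p(r)$ with $1 \le k \le p \le \ell$ by \cite[Prop.~2.14]{orliksolomon:unitaryreflectiongroups}; when such a restriction is not inductively free, Theorem \ref{thm:indfree}(a)(ii) forces $k \le p - 3$ (the inductively free range being $p-2 \le k \le p$), and then Theorem \ref{thm:akl-divfree} gives $\CA^k_p(r) \in \CDF$ since $k \ne 0$. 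So the monomial families contribute no exceptions.

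It remains to treat the exceptional groups $W = G_{23}, \ldots, G_{37}$ (and the exceptional Coxeter groups, already handled). By Theorem \ref{thm:divfreereflections}, $\CA(W)$ itself is divisionally free unless $W \in \{G_{24}, G_{27}, G_{29}, G_{33}, G_{34}\}$; for $W$ not in this list one still must check the restrictions, but Theorem \ref{thm:indfree}(a) shows every restriction of such a $W$ is inductively free, so case (i) applies. For $W \in \{G_{24}, G_{27}, G_{29}\}$, Theorem \ref{thm:indfree}(a)(iii) says $\CA(W)^X$ is inductively free exactly when $\dim X \le 3$; but $\CA(W)$ itself has rank $\ell = 3$ for $G_{24}, G_{27}$ and $\ell = 4$ for $G_{29}$, so for $G_{24}, G_{27}$ the only proper restrictions have $\dim X \le 2$ and are automatically divisionally free, while for $G_{29}$ the restrictions with $\dim X = 3$ are inductively free and those with $\dim X \le 2$ are trivially divisionally free — hence no exceptions here either. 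For $W \in \{G_{33}, G_{34}\}$ I would read off the intersection lattice data from \cite[Tables 10, 11]{orliksolomon:unitaryreflectiongroups}: restrictions with $\dim X \le 3$ are inductively free by Theorem \ref{thm:indfree}(a)(iii), restrictions with $\dim X = 4$ are covered by Theorem \ref{thm:divfreerestrictions}(iii), and for $G_{33}$ (rank $5$) there are no further cases, whereas for $G_{34}$ (rank $6$) the restrictions with $\dim X = 5$ are exactly the ones that Theorem \ref{thm:divfreerestrictions} does not list — giving the single stated exception.

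The main obstacle is the exceptional case $W = G_{34}$, $\dim X = 5$: one must be certain that Theorem \ref{thm:divfreerestrictions} genuinely omits these restrictions (i.e.\ that none of them is inductively free and none is isomorphic to some $\CA^k_p(r)$ in the divisionally free range), which ultimately rests on the explicit restriction tables of Orlik–Solomon and the computation underpinning Theorem \ref{thm:indfree}; everything else is a routine tabulation of ranks and of the lists in Theorems \ref{thm:indfree1}, \ref{thm:indfree}, \ref{thm:akl-divfree}, and \ref{thm:divfreerestrictions}.
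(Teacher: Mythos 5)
Your proposal is correct and takes essentially the same route as the paper, which derives Theorem \ref{thm:divfreereflections2} simply as a restatement of Theorem \ref{thm:divfreerestrictions} combined with the classification in Theorem \ref{thm:indfree}; your case-by-case rank bookkeeping (monomial groups via Theorem \ref{thm:akl-divfree}, the rank-$3$ and rank-$4$ exceptional groups, and $G_{33}$, $G_{34}$ via the Orlik--Solomon tables) just spells out explicitly what the paper leaves implicit.
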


Finally, we show that Theorem \ref{thm:divheredfreereflections}
extends to restrictions.

\begin{theorem}
\label{thm:divheredfreerestrictionreflections}
Let $W$ be a finite, complex 
reflection group with reflection arrangement 
$\CA(W)$ and let $V \ne X \in L(\CA(W))$. 
The restricted arrangement $\CA(W)^X$ is divisionally free 
if and only if it is hereditarily divisionally free.
\end{theorem}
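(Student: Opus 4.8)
The reverse implication is trivial (taking $Y=X$ in the definition of $\HDF$, using $\CA(W)^X = (\CA(W)^X)^X$), so the content is the forward implication: if $\CA(W)^X$ is divisionally free, then $(\CA(W)^X)^Y$ is divisionally free for every $Y \in L(\CA(W)^X)$. The plan is to reduce this to the already-established classification. First I would use Proposition \ref{prop:product-divfree} together with \eqref{eq:restrproduct} to reduce to the case $W$ irreducible, exactly as was done for Theorem \ref{thm:divfreerestrictions}: if $W = W_1 \times \dots \times W_k$ then $\CA(W)^X$ decomposes as a product of restrictions $\CA(W_i)^{X_i}$, hereditary divisional freeness is compatible with products by Corollary \ref{cor:product-hereddivfree}, and any further restriction again decomposes compatibly via \eqref{eq:restrproduct}; so it suffices to treat each irreducible factor.

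For $W$ irreducible, I would invoke Theorem \ref{thm:divfreereflections2}: the restriction $\CA(W)^X$ is divisionally free for every $V \neq X \in L(\CA(W))$ \emph{except} when $W = G_{34}$ and $\dim X = 5$. Now fix $X$ with $\CA(W)^X \in \CDF$ and take any $Y \in L(\CA(W)^X)$. Since $L(\CA(W)^X) = \{Z \in L(\CA(W)) \mid Z \subseteq X\}$, we have $Y \in L(\CA(W))$ and $(\CA(W)^X)^Y = \CA(W)^Y$ (restriction is transitive). So I must show $\CA(W)^Y \in \CDF$. By Theorem \ref{thm:divfreereflections2} the only way this can fail is $W = G_{34}$ and $\dim Y = 5$; but $\dim Y < \dim X$ unless $Y = X$, and if $\CA(W)^X$ itself is divisionally free then (again by Theorem \ref{thm:divfreereflections2}) we cannot have $W = G_{34}$, $\dim X = 5$ — hence $\dim X \le 4$ in that case, forcing $\dim Y \le 4$, so $\CA(W)^Y \in \CDF$. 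For every other irreducible $W$, \emph{all} restrictions lie in $\CDF$ by Theorem \ref{thm:divfreereflections2}, so there is nothing to check. This settles the irreducible case, and combined with the product reduction, the theorem follows. I would also remark that the case $X = V$ is Theorem \ref{thm:divheredfreereflections} itself, so that statement is subsumed.

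The only mild subtlety — and the step I would be most careful about — is the bookkeeping in the product reduction: one must check that when $\CA(W)^X \in \CDF$ and $W$ is reducible, \emph{each} factor $\CA(W_i)^{X_i}$ lies in $\CDF$ (this is the "only if" direction of Proposition \ref{prop:product-divfree}), and then apply the irreducible case to each factor to conclude each $\CA(W_i)^{X_i}$ is \emph{hereditarily} divisionally free, and finally reassemble via Corollary \ref{cor:product-hereddivfree}. None of this is hard, but it is the part where one could slip by conflating $L(\CA(W)^X)$ with $L(\CA(W))$ or mishandling which $X_i$ are allowed to equal $V_i$. Everything else is a direct appeal to Theorem \ref{thm:divfreereflections2}.
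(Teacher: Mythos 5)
Your proof is correct, and while its global architecture matches the paper's (reduce to irreducible $W$ via Proposition \ref{prop:product-divfree}, \eqref{eq:restrproduct} and Corollary \ref{cor:product-hereddivfree}, treat factors with $X_i=V_i$ by Theorem \ref{thm:divheredfreereflections}, then appeal to the classification), the way you settle the irreducible case is genuinely different. The paper argues through the cases of Theorem \ref{thm:divfreerestrictions}: inductively free restrictions are hereditarily inductively free by Theorem \ref{thm:indfree}(b), and in the remaining cases the restrictions of $\CA(W)^X$ are identified explicitly via \cite[Prop.~2.11, Prop.~2.14]{orliksolomon:unitaryreflectiongroups} together with Theorem \ref{thm:akl-divfree}. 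You bypass all of this with the two standard lattice facts $L(\CA^X)=\{Z\in L(\CA)\mid Z\subseteq X\}$ and $(\CA^X)^Y=\CA^Y$, so that every restriction of $\CA(W)^X$ is itself a restriction of $\CA(W)$, and then quote the blanket Theorem \ref{thm:divfreereflections2}; the lone exception ($W=G_{34}$, $\dim Y=5$) is excluded by a dimension count, since any $Y\ne X$ in $L(\CA(W)^X)$ has $\dim Y<\dim X\le 5$ — so in fact you never need the ``only if'' reading of Theorem \ref{thm:divfreereflections2} that you invoke to rule out $\dim X=5$. What your route buys is uniformity: no case analysis, no appeal to Theorem \ref{thm:indfree}(b) or to the Orlik--Solomon propositions, at the modest cost of stating the transitivity of restriction explicitly (it is standard, and is what those citations encode in the monomial case). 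Your bookkeeping in the product reduction, including the factors with $X_i=V_i$, is handled correctly.
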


\begin{proof}
The forward implication follows from 
Corollary \ref{cor:product-hereddivfree}, 
\cite[Prop.\ 2.11, Prop.\ 2.14]{orliksolomon:unitaryreflectiongroups}, 
and Theorems \ref{thm:divheredfreereflections}, 
\ref{thm:divfreerestrictions}, and \ref{thm:indfree}(b).
The reverse implication is obvious.
\end{proof}


\smallskip 
{\bf Acknowledgments}: 
We acknowledge 
support from the DFG-priority program 
SPP1489 ``Algorithmic and Experimental Methods in
Algebra, Geometry, and Number Theory''.


\bigskip

\bibliographystyle{amsalpha}

\begin{thebibliography}{AHR14}

\bibitem[Abe16]{abe:divfree}
T.~Abe,
\emph{Divisionally free arrangements of hyperplanes},
Inventiones Math. \textbf{204(1)}, (2016), 317--346.

\bibitem[AHR14]{amendhogeroehrle:indfree}
N.~Amend, T.~Hoge and G.~R\"ohrle, 
\emph{On inductively free restrictions of reflection arrangements},
J. Algebra \textbf{418} (2014), 197--212.

\bibitem[HR15]{hogeroehrle:inductivelyfree} 
T.~Hoge and G.~R\"ohrle, 
\emph{On inductively free reflection arrangements}, J.~Reine u.~Angew.~Math.
\textbf{701} (2015), 205--220. 

\bibitem[HRS17]{hogeroehrleschauenburg:localizations}
T.~Hoge, G.~R\"ohrle,  and A.~Schauenburg,
\emph{Inductive and Recursive Freeness of Localizations of Multiarrangements},
in: Algorithmic and Experimental Methods in Algebra, Geometry, and Number Theory, Springer-Verlag, 2017.

\bibitem[OS82]{orliksolomon:unitaryreflectiongroups}
P.~Orlik and L.~Solomon,
\emph{Arrangements Defined by Unitary Reflection Groups},
Math. Ann. \textbf{261}, (1982), 339--357.

\bibitem[OT92]{orlikterao:arrangements} P.~Orlik and H.~Terao,
  \emph{Arrangements of hyperplanes}, Springer-Verlag, 1992.

\bibitem[ST54]{shephardtodd}
G.~C.~Shephard and J.~A.~Todd, 
\emph{Finite unitary reflection groups}.
Canadian J. Math. \textbf{6}, (1954), 274--304. 

\bibitem[Ter80]{terao:freeI} H.~Terao, \emph{Arrangements of hyperplanes and
    their freeness I, II}, J.~Fac.~Sci.~Univ.~Tokyo \textbf{27} (1980),
  293--320.

\bibitem[Zie90]{ziegler:matroid}
G.~M.~Ziegler, 
\emph{Matroid representations and free arrangements}, 
Trans. Amer. Math. Soc. \textbf{320} (1990), no.~2, 525--541. 

\end{thebibliography}

\newcommand{\etalchar}[1]{$^{#1}$}
\providecommand{\bysame}{\leavevmode\hbox to3em{\hrulefill}\thinspace}
\providecommand{\MR}{\relax\ifhmode\unskip\space\fi MR }
\providecommand{\MRhref}[2]{%
  \href{http://www.ams.org/mathscinet-getitem?mr=#1}{#2} }
\providecommand{\href}[2]{#2}


\end{document}